\documentclass[11pt]{article}

\usepackage[pagewise]{lineno}
\usepackage{amssymb}
\usepackage{latexsym,amsfonts,amssymb,amsmath,amsthm}
\usepackage{graphicx}
\usepackage{cite}
\usepackage{amsthm}
\usepackage{amsfonts}
\usepackage{xr}
\usepackage{graphicx}
\usepackage{amsmath}
\usepackage{color}
\usepackage{amssymb}
\usepackage{mdwlist}

\newtheorem{theorem}{Theorem}[section]
\newtheorem{definition}{Definition}[section]

\newtheorem{lemma}[theorem]{Lemma}
\newtheorem{rem}[theorem]{Remark}

\numberwithin{equation}{section}

\makeatletter 
\@addtoreset{equation}{section}
\makeatother  

\newcommand\norm[1]{\lVert#1\rVert}
\newcommand\abs[1]{\lvert#1\rvert}

\begin{document}

\title{Decomposition of quasi-all invariant bundles for the cocycle with a set of strongly invariant cones}
\author{Lirui Feng\thanks{School of Mathematical Sciences, University of Science and Technology of China, Hefei, Anhui, 230026, People's Republic of China (ruilif@ustc.edu.cn,\,ruilif@163.com), Supported by NSF of China No.12331006 and the Chinese Academy of Sciences through grant: XDB0900100 (Sub-project: XDB0900101).}}
\date{}
\maketitle
\begin{abstract}
In this paper, we prove that a linear cocycle with a set of strongly invariant cones admits a decomposition of quasi-all invariant bundles. We further obtain the certain location-relations between the fibres of positive invariant bundles and these cones, as well as we get the order-relations among characteristic exponents of the linear cocycle on the positive invariant bundles. The key of proofs is to analyze relations among positive invariant bundles with respect to a linear cocycle that admits both of a $k$-exponential separation and a $k^{\prime}$-exponential separation. These results are applied to a class of semilinear equations on a Hilbert space.
\end{abstract}

\section{Introduction} The linear cocylce $(F,\mathcal{T})$ on the product space $K\times X$ is a bundle map on $K\times X$ (more precisely, see Def. \ref{on-cocycle}(i)), which can be extracted from the variational equation or the difference equation of various nonlinear evolution equations. Roughly speaking, a decomposition of quasi-all invariant bundles is that the product space $K\times X$ is decomposed as a whitney sum of several invariant finite-dimensional continuous subbundles and a positive invariant finite-codimensional continuous subbundle with respect to $(F,\mathcal{T})$ (see Def. \ref{DIB}). This property of a linear cocycle provides a fundamental base of qualitative analysis for various evolution systems, which is proved and obtained by us for the linear cocycle admitting a $k_i$-exponential separation with different $k_i\in \mathbb{N}^+$ at the same time (see Lemma \ref{Extending-uni} and the proof in Theorem A). 

In brief, a $k$-exponential separation of the linear cocycle $(F,\mathcal{T})$ on $K\times X$ is that the product space $K\times X$ is decomposed as a whitney sum of an invariant $k$-dimensional continuous subbundle $K\times (P_x)$ and a positive invariant $k$-codimensional continuous subbundle $K\times (Q_x)$ with respect to $(F,\mathcal{T})$ such that for any $x\in K$, the proportion of the evolution rate of vectors originating from $P_x\setminus\{0\}$ to the one of vectors originating from $Q_x\setminus\{0\}$ under the action of $(F,\mathcal{T})$ is controlled by an exponential function (more precisely, see Def. \ref{k-ES}). This is a very important property for dynamical analysis of differential equations and dynamical systems (See e.g. \cite{C-H,F-Wu,H-P-M,L-R-S,M-S-jde03,M-S-tams-13,M-S-jmaa-13,P-1,Pu,Pu-S,P-T,Shen-Yi,Tere}). Pol\'{a}$\check{c}$ik and Tere$\check{s}\check{c}\acute{a}$k (See \cite{P-T}) proved that a linear cocyle on the certain product space admits a $1$-exponential separation by utilizing strong positivity and compactness of fibre-maps with respect to a solid convex cone (see Def. \ref{k-cone} and the concepts related to a $k$-cone; see Remark \ref{sp-si}(i) for positivity and strong positivity with respect to a cone). Later on, Tere$\check{s}\check{c}\acute{a}$k (See \cite{Tere}) proved that a linear cocyle on the certain product space admits a $k$-exponential separation for a general positive integer $k$ under the assumption that fibre-maps of the linear cocycle are compact and strongly positive with respect to a complemented and $k$-solid cone (see Def. \ref{k-cone} and the concepts related to a $k$-cone). Since strong positivity of fibre-maps of a linear cocycle with respect to some $k$-cones is a very important feasible condition that can be verified in various systerms generated from evolution equations, we choose linear cocycles with a set of strongly invariant cones (see Def.\ref{on-cocycle} (iii)) as our research objects to investigate their property that's a decomposition of quasi-all invariant bundles.

In this paper, we will prove that the ranks $\{k_i\}_{i=0}^{N}$ of the strongly invariant cones with respect to the linear cocycle $(F,\mathcal{T})$ (see Def.\ref{on-cocycle} (iii)) indicates the index set $J$ of the decomposition of quasi-all invariant bundles for the linear cocycle $(F,\mathcal{T})$, which is used to symbolize the dimensions of positive invariant bundles with respect to $(F,\mathcal{T})$ that referred to by the decomposition of quasi-all invariant bundles for $(F,\mathcal{T})$. Our approach makes us need not to care about the location-relations among these strongly invariant cones with respect to $(F,\mathcal{T})$ and reveals that the ranks of these strongly invariant cones is the essentially visual charateristic to display the type (the index set $J$) of the decomposition of quasi-all invariant bundles, as well as gives us the advantage of flexible choice one by one on the possible cones strongly invariant with respect to $(F,\mathcal{T})$ in applications. The key of our approach is to analyze the containing-relations among positive invariant bundles referred to by the $k$-exponential separation and $k^{\prime}$-exponential separation of $(F,\mathcal{T})$ without all conditions about strongly invariant cones with respect to $(F,\mathcal{T})$. We prove that the invariant $k$-dimensional continuous bundle $K\times (P_x)$ {\rm(}resp. $k^{\prime}$-dimensional continuous bundle $K\times (P^{\prime}_x)$ {\rm)} and the positive invariant $k$-codimensional continuous bundle $K\times (Q_x)$ {\rm(}resp. $k^{\prime}$-codimensional continuous bundle $K\times (Q^{\prime}_x)$ {\rm)} referred to by the $k$-exponential separation {\rm(}resp. $k^{\prime}$-exponential separation{\rm)} of $(F,\mathcal{T})$ with $k\leq k^{\prime}$ have the following relations: $P_x \subset P^{\prime}_x$ and $Q^{\prime}_x\subset Q_x$ for any $x\in K$ (see Lemma \ref{Extending-uni}). When the linear cycocle $(F,\mathcal{T})$ is equipped with a set of strongly invariant cones, we can obtain the $k_i$-exponential separation of $(F,\mathcal{T})$ with different $k_i\in\mathbb{N}^+$ (the rank of the strongly invariant cone with respect to $(F,\mathcal{T})$) due to the result on $k$-exponential separation obtained by Tere$\check{s}\check{c}\acute{a}$k (See \cite{Tere}). As a series of consequences, we obtain the decomposition of quasi-all invariant bundles for $(F,\mathcal{T})$, as well as the certain location-relations between the fibres of positive invariant bundles and these cones, and the order-relations among characteristic exponents of $(F,\mathcal{T})$ on the positive invariant bundles (see Def. \ref{ki-CLyaE} and Theorem A). We put more results on different exponents of $(F,\mathcal{T})$ (see Def. \ref{k-LyaE-CoLyaE-LyaV} and Def. \ref{ki-CLyaE}) in Corollary B.

This paper is organized as follows. In section 2, we give some notations, definitions and preliminary lemmas. In section 3, we present our main results. In section 4, we prove our main results. In section 5, we apply our main results to a class of semilinear equations on a Hilbert space.

\section{Preliminary}

Let $X$ be a Banach space equipped with the norm $\norm{\cdot}$, and $X^*$ be its dual space. Let $K\subset X$ be a nonempty compact subset of $X$. Denoted by $L(X)$ the space of all linear bounded operators on $X$, equipped with the norm $\norm{\cdot}_{L(x)}$.
\begin{definition}\label{k-cone}A closed subset $C\subset X$ is called as {\it a cone of rank $k$ \rm{(}abbr. $k$-cone\rm{)}} if:

{\rm (i)} $\mathbb{R}\cdot C\subset C$; 

{\rm (ii)} $\max\{\text{dim}\,H: \,\,H\subset C\,\, \text{is a linear subspace} \}=k$. 
\end{definition} A $k$-cone $C$ is called as {\it solid} if its interior $\text{Int}C\neq \emptyset$, and is called as {\it k-solid} if there is a $k$-dimensional subspace $H$ such that $H\setminus\{0\}\subset \text{Int}C$. Moreover, it is called as {\it complemented} if there is a closed $k$-codimensional linear subspace $H^c$ such that $H^c\setminus \{0\}\subset X\setminus C$. We recall here that a nonempty closed set $X^+\subset X$ is called a {\it convex cone} if $\mathbb{R}\cdot X^+\subset X^+$, $x+y\in X^+$ for any $x,\,y\in X^+$ and $X^+\cap (-X^+)=\{0\}$. Furthermore, $X^+$ is called {\it soild} if the interior ${\rm Int} X^+$ of $X^+$ is nonempty. It is clear that $X^+\cup (-X^+)$ is a cone of rank 1.

Let $G(k,X)$ be {\it the Grassmanian of $k$-dimensional linear subspaces of $X$}, which consists of all $k$-dimensional linear subspace in $X$. $G(k, X)$ is a completed metric space by endowing {\it the gap metric} (see \cite{K,L-L}). More precisely, the gap metric is defined as

\begin{equation*} d(L_1,L_2)=\max\left\{\sup_{v\in L_1\cap S}\inf_{u\in L_2\cap S}\norm{v-u}, \sup_{v\in L_2\cap S}\inf_{u\in L_1\cap S}\norm{v-u}\right\},
 \end{equation*} for any nontrivial
closed subspaces $L_1,L_2\subset X$, where $S=\{v\in X:\norm{v}=1\}$ is the unit sphere. 

Given a $k$-cone $C$. Denoted by $G_k(C)$ the set of $k$-dimensional subspaces inside $C$ and $G_k({\rm Int} C)$ the set of $k$-dimensional subspaces inside ${\rm Int} C\cup \{0\}$ for $C$ being $k$-solid, respectively, i.e.,

$$G_k(C) =\{L\in G(k,X):\,L\subset C\}\quad \text{and} \quad G_k({\rm Int} C)=\{L\in G(k,X):\,L\setminus\{0\}\subset {\rm Int} C\}.$$ For $C$ being a complemented and $k$-solid cone, let 

$$G_k(C^*)=\{L\in G(k,\,X^*):\, {\rm Ker}(L)\cap C=\{0\}\}.$$ 

\vskip 3mm

Let $\{P_x\}_{x\in K}$ be a set of $k$-dimensional vector subspaces of $X$, and $\{Q_x\}_{x\in K}$ be a set of $k$-codimensional closed vector subspaces of $X$.
\begin{definition}
{\rm (i)} $K\times (P_x)$ is called {\it a $k$-dimensional continuous vector bundle} on $K$ {\rm(}for short, $k$-dimensional continuous bundle{\rm)} if the map $K\mapsto G(k,\,X): x\mapsto P_x$ is continuous;

{\rm (ii)} $K\times (Q_x)$ is called {\it a $k$-codimensional continuous vector bundle on $K$} {\rm(}for short, $k$-codimensional continuous bundle{\rm)} if there is a $k$-dimensional continuous vector bundle $K\times (L_x)\subset K\times X^*$ such that the kernel ${\rm Ker}(L_x)=Q_x$ for each $x\in K$. 
\end{definition}
 
Let $K\times (P_x)$ be a $k$-dimensional continuous bundle on $K$, and $K\times (Q_x)$ be a $k$-codimensional continuous bundle
on $K$ such that $X=P_x\oplus Q_x$ for all  $x\in K$. $P_x$ (resp. $Q_x$) is called {\it the fibre of $K\times(P_x)$ {\rm(}resp. $K\times (Q_x)${\rm)} at $x\in K$}. We define the {\it set of projections associated with the decomposition} $X=P_x\oplus Q_x$ as $\{\Pi^{P_x}_Q\}_{x\in K}$, where $\Pi^{P_x}_Q$ is the linear projection of $X$ on $P_x$ along $Q_x$ for each $x\in K$.
Write $\Pi^{Q_x}_P={\rm Id}-\Pi^{P_x}_Q$ for each $x\in K$. Clearly, $\Pi^{Q_x}_P$ is the linear projection of $X$ on $Q_x$ along $P_x$. Moreover,
both $\Pi^{P_x}_Q$ and $\Pi^{Q_x}_P$ are continuous with respect to $x\in K$.

\vskip 5mm
Throughout this paper, let $F$ be a homeomorphism on the nonempty compact set $K$, and $\mathcal{T}$ be a continuous map from $K$ to $L(X)$ defined by $\mathcal{T}(x)=T_x\in L(X)$. Thus, $\sup\limits_{x\in K}\norm{T_x}_{L(X)}$ is a fintie positive number. Denoted by $T_x^n=T_{F^{n-1}(x)}\circ T^{n-1}_{x}$ for any $n\in\mathbb{N}^+$, and $T^0_x={\rm Id}$ for any $x\in K$. Let 

$$\begin{aligned}&I=\{0,1,2,\cdots,N\}\,\, {\rm with\,\,an \,\,integer}\,\,N\geq1,\,\, {\rm and }\\
&J=\{j_i\in \mathbb{N}^+: i\in I\}\,\, {\rm with}\,\, k_{i}=\sum_{l=0}^{i}j_l \,\,{\rm for\,\, any}\,\, i\in I.\end{aligned}$$ Throughout this paper, let $\mathcal{C}=\{C_i\}_{i\in I}$ be a set of cones such that $C_i$ being a complemented and $k_i$-solid cone with $0<k_i<k_{i+1}$ for any $i\in I\setminus\{N\}$, and $k_i\in \mathbb{N}^+$ for any $i\in I$. The unit sphere is denoted by 

$$S=\{v\in X:\norm{v}=1\}.$$ Denoted by $\mathcal{SP}(C,\tilde{C})\subset L(X)$ the set of all linear operators $T$ such that $T(C\setminus\{0\})\subset {\rm Int} \tilde{C}$, where $C,\tilde{C}$ are two complemented and $k$-solid cones.

\begin{definition}\label{on-cocycle}

\noindent {\rm (i)} The linear cocycle of $F$ and $\mathcal{T}$ on $K\times X$ is the map

$$(F,\mathcal{T}): K\times X\rightarrow K\times X, \,\,(x,v)\mapsto (F(x), T_x v).$$ For short, $(F,\mathcal{T})$ on $K\times X$ is written as $(F,\mathcal{T})$. Here, $F$ and $\mathcal{T}$ are called the force-map and fibre-map-value map of $(F,\mathcal{T})$ respectively; $K$ and $X$ are called as the base space and the fibre space of $(F,\mathcal{T})$ respectively; $T_x$ is called as the fibre map of the linear cocycle $(F,\mathcal{T})$ at $x\in K$.

{\rm (ii)} The $k$-dimensional continuous bundle $K\times (P_x)$ {\rm (resp.} the $k$-codimensional continuous bundle $K\times (Q_x)${\rm)} is called as positive invariant with respect to $(F,\mathcal{T})$ if $TP_x\subset P_{F(x)}$ {\rm(resp.} $TQ_x\subset Q_{F(x)}${\rm)} for any $x\in K$; The $k$-dimensional continuous bundle $K\times (P_x)$ {\rm (resp.} the $k$-codimensional continuous bundle $K\times (Q_x)${\rm)} is called as invariant with respect to $(F,\mathcal{T})$ if $TP_x=P_{F(x)}$ {\rm(resp.} $TQ_x=Q_{F(x)}${\rm)} for any $x\in K$.

{\rm (iii)} The set of cones $\mathcal{C}$ is called as positve invariant {\rm (for short, invariant)} with respect to $(F,\mathcal{T})$ if $T_x C_i \subset C_i $ for any $(x,i)\in K\times I$; it is called as strongly positive invariant {\rm (for short, strongly invariant)} with respect to $(F,\mathcal{T})$ if $T_x (C_i\setminus\{0\})\subset {\rm Int} C_i$ for any $(x,i)\in K\times I$.
\end{definition}

\begin{rem}\label{sp-si}
{\rm (i)}The $k$-solid complemented cone $C$ is called {\rm(}resp., strongly positive invariant{\rm)} positive invariant w.r.t. $(F,\mathcal{T})$ if {\rm(}resp., $T_x (C\setminus\{0\})\subset{\rm Int}C$ {\rm)} $T_x C\subset C $ for any $x\in K$; for short, {\rm(}resp., strongly invariant{\rm)} invariant. $T_x$ is called strongly positive {\rm(}resp. positive{\rm)} with respect to a cone $C_{alg}$ if $T_x (C_{alg}\setminus\{0\})\subset{\rm Int}C_{alg}$ {\rm(}resp. $T_x C_{alg}\subset C_{alg} ${\rm)}, where $C_{alg}$ is the $k$-cone $C$ or a solid convex cone $X^+$.

{\rm (ii)}If $K=\{e\}$ is an equilibrium of $F$, the invariance of $\mathcal{C}$ w.r.t. $(F,\mathcal{T})$ implies that $T_e$ is positive w.r.t. $C_i$ {\rm(}i.e., $T_e C_i\subset C_i ${\rm)} for any $i\in I$; Similiarly, the strong invariance of $\mathcal{C}$ implies that $T_e$ is strongly positive w.r.t. $C_i$ {\rm(}i.e., $T_e (C_i\setminus \{0\})\subset {\rm Int} C_i ${\rm)} for any $i\in I$.
\end{rem}

\begin{definition}\label{k-ES}$(F,\mathcal{T})$ on $K\times X$ admits a $k$-exponential separation {\rm(}abbr. $k$-ES{\rm)} if

{\rm (i)} there are a $k$-dimensional continuous bundle $K\times (P_x)$ and a $k$-codimensional continuous bundle $K\times (Q_x)$ such that $X=P_x\oplus Q_x$ for any $x\in K$;

{\rm (ii)} $T_x P_x=P_{F(x)}$ and $T_x Q_x\subset Q_{F(x)}$ for any $x\in K$;

{\rm (iii)} there are constants $M>0$, $\gamma\in (0,1)$ such that 

\begin{equation}\label{Separation}\norm{T^n_x w}\leq M \gamma^n \norm{T^n_x v}\end{equation} for any $v\in P_x\cap S$, $w\in Q_x\cap S$, $x\in K$ and $n\in \mathbb{N}^+$. 

In additional, 

{\rm (iv)} if $C$ is a $k$-solid cone and $P_x\setminus \{0\}\subset {\rm Int} C$, $Q_x\setminus\{0\}\subset X\setminus C$ for any $x\in K$, then it is called that $(F,\mathcal{T})$ on $K\times X$ admits a $k$-exponential separation associated with $C$.

For short, we call that $(F,\mathcal{T})$ admits a $k$-exponential separation. We also call $(F,\mathcal{T})$ admits a $k$-exponential separation as a $k$-exponential separation of $(F,\mathcal{T})$ holds.

If $K\times (P_x)$ and $K\times (Q_x)$ are the unique $k$-dimensional continuous bundle and $k$-codimensional continuous bundle such that Def.\ref{k-ES}(i)-(iii) hold, then we call that $(F,\mathcal{T})$ on $K\times X$ has the unique $k$-exponential separation {\rm(}or, the unique $k$-exponential separation of $(F,\mathcal{T})$ holds.{\rm)}.
\end{definition}

\begin{definition}\label{k-LyaE-CoLyaE-LyaV}

{\rm (i)} Let $K\times (P_x)$ and $K\times (Q_x)$ are an invariant $k$-dimensional continuous bundle and a positive invariant  $k$-codimensional continuous bundle with respect to $(F,\mathcal{T})$ respectively such that $P_x\oplus Q_x=X$ for any $x\in K$. For any $x\in K$, we define the {\it $k$-Lyapunov exponent $\lambda_{\mathcal{T},kx}$ of $(F,\mathcal{T})$ on $K\times(P_x)$ {\rm(}for short, $k$-Lyapunov exponent $\lambda_{\mathcal{T},kx}$}) and the $k$-coLyapunov exponent $\lambda^{co}_{\mathcal{T},kx}$ of $(F,\mathcal{T})$ on $K\times (Q_x)$ {\rm(}for short, $k$-coLyapunov exponent $\lambda^{co}_{\mathcal{T},kx}${\rm)} as 

\begin{equation}\begin{aligned}&\quad\lambda_{\mathcal{T},kx}=\limsup\limits_{n\rightarrow +\infty}\frac{\log m(T_x^n\mid_{P_x})}{n}\\
&\quad\quad {\rm and}\\
&\lambda^{co}_{\mathcal{T},kx}=\limsup\limits_{n\rightarrow +\infty}\frac{\log\norm{T_x^n \mid_{Q_x} }_{L(X)}}{n} \end{aligned}\end{equation} respectively, where $m(T_x^n\mid_{P_x})=\inf\limits_{v\in P_x\cap S}\norm{T^n_x v}$;

{\rm (ii)} For any $x\in K$ and $v\in X\setminus\{0\}$, we can also define the Lyapunov exponent of $(F,\mathcal{T})$ {\rm(}for short,  Lyapunov exponent{\rm)} as

\begin{equation}\lambda_{\mathcal{T},x}(v)=\limsup\limits_{n\rightarrow +\infty}\frac{\log \norm{T_x^nv}}{n}.\end{equation}
\end{definition}

\begin{lemma}\label{k-Lya-G} Assume that $(F,\mathcal{T})$ admits a $k$-exponential separation, whose positive invariant bundles are $k$-dimensional continuous bundle $K\times (P_x)$ and $k$-codimensional continuous bundle $K\times (Q_x)$. Then, there is a $\gamma\in(0,1)$ such that

\begin{equation}\lambda_{\mathcal{T},kx}+\log(\gamma)\geq \lambda^{co}_{\mathcal{T},kx}\end{equation} for any $x\in K$.
\end{lemma}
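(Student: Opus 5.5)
Take $\gamma\in(0,1)$ and $M>0$ to be the constants of the $k$-exponential separation in Def.~\ref{k-ES}(iii). The idea is to convert the pointwise inequality \eqref{Separation} into an inequality between the operator quantities appearing in Def.~\ref{k-LyaE-CoLyaE-LyaV}(i), namely $\norm{T^n_x\mid_{Q_x}}_{L(X)}$ and $m(T^n_x\mid_{P_x})$. Then we take logarithms, divide by $n$ and pass to the $\limsup$.

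First fix $x\in K$ and $n\in\mathbb{N}^+$. For any $w\in Q_x\cap S$ and any $v\in P_x\cap S$, \eqref{Separation} gives $\norm{T^n_x w}\le M\gamma^n\norm{T^n_xv}$. Taking the infimum over $v\in P_x\cap S$ on the right, and then the supremum over $w\in Q_x\cap S$ on the left, yields
\begin{equation*}
\norm{T^n_x\mid_{Q_x}}_{L(X)}\le M\gamma^n\, m(T^n_x\mid_{P_x}).
\end{equation*}
Here $m(T^n_x\mid_{P_x})>0$. The reason is that $T_x$ maps $P_x$ onto the $k$-dimensional space $P_{F(x)}$, so it is injective on $P_x$. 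Hence $T^n_x\mid_{P_x}$ is injective on the finite-dimensional space $P_x$, and by compactness of $P_x\cap S$ the infimum is positive. The left-hand side may be $0$, in which case its logarithm is $-\infty$ and the claimed inequality is trivial.

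Next, take logarithms and divide by $n$:
\begin{equation*}
\frac{\log\norm{T^n_x\mid_{Q_x}}_{L(X)}}{n}\le \frac{\log M}{n}+\log\gamma+\frac{\log m(T^n_x\mid_{P_x})}{n}.
\end{equation*}
Taking $\limsup_{n\to+\infty}$, the term $\log M/n$ tends to $0$. Using $\limsup(a_n+b_n)\le\limsup a_n+\limsup b_n$ with $b_n$ constant, we obtain $\lambda^{co}_{\mathcal{T},kx}\le\lambda_{\mathcal{T},kx}+\log\gamma$, as claimed.

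The only point needing care is the case where the exponents are infinite. $\lambda_{\mathcal{T},kx}$ is finite from above, since $m(T^n_x\mid_{P_x})\le \big(\sup_{y\in K}\norm{T_y}\big)^n$. It could equal $-\infty$, and then the inequality still holds with the convention $-\infty\le-\infty$. So the main obstacle is just the bookkeeping of these conventions together with the positivity of $m$; the inequality itself is immediate from \eqref{Separation}.
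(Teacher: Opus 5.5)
Your proposal is correct and follows essentially the same argument as the paper: pass from the pointwise estimate \eqref{Separation} to $\sup_{w\in Q_x\cap S}\norm{T^n_x w}\le M\gamma^n m(T^n_x\mid_{P_x})$, then take logarithms, divide by $n$ and take the $\limsup$. Your remarks on the positivity of $m(T^n_x\mid_{P_x})$ (via injectivity of $T_x$ on $P_x$) and on the $-\infty$ conventions are details the paper leaves implicit.
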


\begin{proof} Since $K\times (P_x)$ is a $k$-dimensional continuous bundle, one has that $P_x\cap S$ is compact for any $x\in K$. Together with the property {\rm (iii)} of Def.$\,$\ref{k-ES} for the $k$-ES of $(F,\mathcal{T})$, there exist constants $M>0$ and $\gamma\in(0,1)$ such that for any $w\in Q_x\cap S$ and $x\in K$, 

\begin{equation}\label{H-ES-iii}\norm{T^n_x w}\leq M \gamma^n m(T^n_x\mid_{P_x}).\end{equation} It then follows that $\sup\limits_{w\in Q_x\cap S}\norm{T^n_x w}\leq M \gamma^n m(T^n_x\mid_{P_x})$ for any $x\in K$ and hence, $\lambda_{\mathcal{T},kx}+\log(\gamma)\geq \lambda^{co}_{\mathcal{T},kx}$ for any $x\in K$.
\end{proof}

\begin{definition}\label{DIB} $(F,\mathcal{T})$ on $K\times X$ admits a decomposition of quasi-all invariant bundles with the index set $J=\{j_i:\,i\in I\}\subset \mathbb{N}^+$ {\rm(}abbr. DIB{\rm)} if

{\rm (i)} there are $j_i$-dimensional continuous bundle $K\times (P_{i,x}),\,i\in I$ and $k_{N}$-codimensional continuous bundle $K\times (Q_x)$ such that $X=P_{0,x}\oplus P_{1,x}\oplus\cdots P_{N,x}\oplus Q_{x}$ for any $x\in K$;

{\rm (ii)} $T_x P_{i,x}=P_{i,F(x)}$ and $T_x Q_x\subset Q_{F(x)}$ for any $x\in K$ and $i\in I$.

For short, we call that $(F,\mathcal{T})$ admits a decomposition of quasi-all invariant bundles. We also call that $(F,\mathcal{T})$ admits a decomposition of quasi-all invariant bundles as a decomposition of quasi-all invariant bundles for $(F,\mathcal{T})$ holds.
\end{definition}

\begin{definition}\label{ki-CLyaE} Assume that $(F,\mathcal{T})$ admits a DIB w.r.t. $J$, whose positive invariant bundles are denoted by the same notations in Definition \ref{DIB}. For each $x\in K$ and $i\in I$, we define a $j_i$-charateristic exponent of $(F,\mathcal{T})$ on $K\times (P_{i,x})$ {\rm(}for short, $j_i$-charateristic exponent{\rm)} as 

\begin{equation} c_{\mathcal{T},ix}=\limsup\limits_{n\rightarrow +\infty}\frac{\log m(T_x^n\mid_{P_{i,x}})}{n},
\end{equation} where $m(T_x^n\mid_{P_{i,x}})=\inf\limits_{v\in P_{i,x}\cap S}\norm{T^n_x v}$.
\end{definition}

Assume that $(F,\mathcal{T})$ admits a decomposition of quasi-all invariant bundles with the index set $J$, whose related notations are the same as the ones in {\rm Def.} \ref{DIB}. Hereafter, for any $x\in K$,

{\rm (i)} denoted by 

$$\begin{aligned}&SP_{i,x}=P_{0,x}\oplus\cdots\oplus P_{i,x}\,\,{\rm with}\,\,i\in I,\\
&Q_{N,x}=Q_x,\quad Q_{i,x}=P_{i+1,x}\oplus\cdots\oplus P_{N,x}\oplus Q_x\,\,{\rm with} \,\,i\in I\setminus\{N\} \\
&\quad {\rm and}\\
&Q_{P^-_{0,x}}=Q_{0,x},\quad Q_{P^-_{i,x}}=SP_{i-1,x}\oplus Q_{i,x}\,\,{\rm with}\,\,i\in I\setminus\{0\};\end{aligned}$$ 

{\rm (ii)} denoted by $\lambda_{\mathcal{T},k_{i} x}$ the $k_{i}$-Lyapunov exponent of $(F,\mathcal{T})$ on $K\times (SP_{i,x})$, and $\lambda^{co}_{\mathcal{T},k_{i} x}$ the $k_{i}$-coLyapunov exponent of $(F,\mathcal{T})$ on $K\times (Q_{i,x})$ respectively for any $i\in I$. 

\section{Main results}

{\bf Theorem A.} Assume that the set of cones $\mathcal{C}$ is strongly invariant with respect to the linear cocycle $(F,\mathcal{T})$ on $K\times X$ such that $T_x$ is compact for any $x\in K$. Then, $(F,\mathcal{T})$ on $K\times X$ admits a decomposition of quasi-all invariant bundles with the index set $J$, whose positive invariant bundles are $j_i$-dimensional bundle $K\times (P_{i,x})$ and $k_N$-codimensional bundle $K\times (Q_x)$ such that:

{\rm (i)} $X=P_{0,x}\oplus\cdots\oplus P_{N,x}\oplus Q_x$ for any $x\in K$;

{\rm (ii)} $T_x P_{i,x}=P_{i,F(x)}$ and $T_x Q_{x}\subset Q_{F(x)}$ for any $x\in K$ and $i\in I$; 

{\rm (iii)}  $P_{0,x}\setminus \{0\}\subset {\rm Int}C_0$ and $P_{i+1,x}\setminus \{0\}\subset {\rm Int}C_{i+1}\setminus C_i$ for any $x\in K$ and $i\in I\setminus\{N\}$;

{\rm (iv)} $Q_{x}\setminus\{0\}\subset X\setminus C_N$;

{\rm (v)} there exists a $\gamma_i\in(0,1)$ such that $c_{\mathcal{T},ix}+\log(\gamma_i)\geq c_{\mathcal{T},i+1 x}$ for any $x\in K$ and $i\in I\setminus \{N\}$.

\vskip 3mm
\noindent {\bf Corollary B.} Assume that all hypothesises and notations in Theorem A hold. Then,

{\rm (i)} $c_{\mathcal{T},Nx}\geq \lambda_{\mathcal{T},k_Nx}$ and $c_{\mathcal{T},ix}\geq \lambda_{\mathcal{T},k_ix}\geq c_{\mathcal{T},i+1x}-\log(\gamma_i) $ for any $x\in K$ and $i\in I\setminus\{N\}$;

{\rm (ii)} $\lambda_{\mathcal{T},x}(v)=\lambda_{\mathcal{T},x}(\Pi^{P_{i,x}}_{Q_{P^-_i}}v)$ for any $v\in X\setminus Q_x$ and $x\in K$ if $i\in I$ is the smallest index such that $\Pi^{P_{i,x}}_{Q_{P^-_i}}v\neq0$.

\begin{rem} In usual,  it is difficult to know whether the linear cocycle $(F,\mathcal{T})$ on the whole product space $K\times X$ admits a decomposition of quasi-all invaraint bundles with the certain index set $J$ or not. Fortunately, to prove the strong invariance of a set of cones $\mathcal{C}$ w.r.t. the linear cocycle $(F,\mathcal{T})$ is a very important and feasible route to comfirm this property.
\end{rem}

\section{Proofs of main results}
Before the proofs, we give some technical lemmas.

\begin{lemma}\label{L: k-ES} Assume that the linear cocycle $(F,\mathcal{T})$ on $K\times X$ is such that $T_x$ is compact for any $x\in K$. Assume that $C$ is a strongly invariant  complemented and $k$-solid cone with respect to $(F,\mathcal{T})$ on $K\times X$. Then, $(F,\mathcal{T})$ on $K\times X$ admits a $k$-exponential separation associated with $C$.
\end{lemma}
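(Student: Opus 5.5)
This lemma is essentially Tere\v{s}\v{c}\'{a}k's theorem \cite{Tere}, and I would prove it by reducing to that result, reconstructing the main steps so that the association with $C$ (Def.~\ref{k-ES}(iv)) is explicit.

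\emph{Step 1: the bundle $P_x$.} Strong invariance gives $T_x(C\setminus\{0\})\subset{\rm Int}C$. Hence $T_x$ maps $G_k(C)$ into $G_k({\rm Int}C)$: a $k$-dimensional $L\subset C$ is mapped injectively, since $T_xv\in{\rm Int}C$ forces $T_xv\neq0$ for $v\neq 0$. For each $x$ I would set
\begin{equation*}
P_x=\lim_{n\to\infty}T^n_{F^{-n}(x)}L
\end{equation*}
for any $L\in G_k(C)$, using the fact that $F$ is a homeomorphism. To show this limit exists, is independent of $L$, and depends continuously on $x$, I would prove that $T_x$ contracts $G_k(C)$ in a projective (Hilbert-type) metric adapted to $C$, or alternatively in the gap metric after finitely many iterates. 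Compactness of $T_x$ and compactness of $K$ should give uniformity: the set $\{T_x(C\cap S)\}_{x\in K}$ is precompact, so it lies at a uniformly positive distance inside ${\rm Int}C$. Invariance $T_xP_x=P_{F(x)}$ then follows from the construction, together with $P_x\setminus\{0\}\subset{\rm Int}C$.

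\emph{Step 2: the bundle $Q_x$.} Dually, I would work with the adjoint $T_x^*$ acting on $G_k(C^*)$ and define $L_x=\lim_{n\to\infty}(T^n_x)^*L^*$ for $L^*\in G_k(C^*)$, with $Q_x={\rm Ker}L_x$. Because $C$ is complemented, $G_k(C^*)\neq\emptyset$, and the strong positivity of $T_x$ transfers to a uniform contraction for the adjoint. Then $Q_x\cap C=\{0\}$, $T_xQ_x\subset Q_{F(x)}$, and $X=P_x\oplus Q_x$, the last because $P_x\subset C$ while $Q_x\cap C=\{0\}$ and the dimensions match. Continuity follows as in Step 1.

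\emph{Step 3: the exponential estimate.} This is the main obstacle. I would use the standard cone argument: take $v\in P_x\cap S$ and $w\in Q_x\cap S$. For small $\epsilon>0$, the vector $v+\epsilon w$ lies in $C$ uniformly in $x$, since $P_x\cap S$ sits uniformly inside ${\rm Int}C$ by compactness. If $\norm{T^n_xw}/\norm{T^n_xv}$ failed to decay, the iterate $T^n_x(v+\epsilon w)$ would leave the uniform cone neighbourhood of $P_{F^n(x)}$, contradicting that $T^n_x$ stays in $C$ and is contracted toward $P_{F^n(x)}$. This yields a uniform $n_0$ with
\begin{equation*}
\norm{T^{n_0}_xw}\le\tfrac12\norm{T^{n_0}_xv}.
\end{equation*}
Iterating this bound, and using invariance of both bundles together with a bounded-angle constant, gives~\eqref{Separation} with $\gamma=2^{-1/n_0}$.

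If the citation is accepted, the whole proof reduces to checking that the paper's hypotheses match those of \cite{Tere}, namely a compact fibre map and strong positivity with respect to a complemented $k$-solid cone, and that the bundles produced there satisfy (iv). I would present it that way, with Steps 1--3 as the justification.
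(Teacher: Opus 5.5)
This is essentially the paper's approach: its proof of the lemma is just the citation of Tere\v{s}\v{c}\'{a}k's Theorem~4.1 (plus skew-product (semi)flow versions in the author's related papers), so your reduction to checking that the hypotheses and the association with $C$ in Def.~\ref{k-ES}(iv) match is all that is needed. If you keep Steps 1--3 as justification, note that the Step~1 claim that $\bigcup_{x\in K}T_x(C\cap S)$ lies a uniform positive distance inside ${\rm Int}\,C$ is false in general: $C\cap S$ can contain a weakly null sequence, which a compact operator sends to $0\notin{\rm Int}\,C$. Also, a general $k$-cone carries no Hilbert projective metric, so the sketch does not by itself reproduce Tere\v{s}\v{c}\'{a}k's argument.
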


\begin{proof} See \cite[Theorem 4.1]{Tere}. See also \cite[Lemma 2.7]{F}, \cite[Lemma 3.3]{F-II}, \cite[Proposition 3.2]{FWW-2} and \cite[Lemma 3.3]{FWW-3} for versions of the linear skew-product flow or semiflow.
\end{proof}

\begin{lemma}\label{L: dist-kbpc} Assume that the linear cocycle $(F,\mathcal{T})$ on $K\times X$ admits a $k$-exponential separation associated with $C$, whose positive invariant $k$-dimensional continuous bundle and $k$-codimensional continuous bundle are $K\times (P_x)$ and $K\times (Q_x)$ respectively. Then, there exists a constant $\delta_{P}>0$ such that 

$$\{v\in X: d_{usual}(v,P_x\cap S)\leq\delta_{P}\}\subset {\rm Int} C$$ for any $x\in K$, where $ d_{usual}(v,P_x\cap S)=\inf\limits_{w\in P_x\cap S}\{\norm{v-w}\}$.   
\end{lemma}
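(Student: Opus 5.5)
The idea is a compactness argument over the base. I would use the continuity of $x\mapsto P_x$ in the gap metric together with the openness of ${\rm Int}C$. First I fix $x\in K$ and show that $P_x\cap S$ has a uniform neighborhood inside ${\rm Int} C$. Since $P_x$ is $k$-dimensional, $P_x\cap S$ is compact. By Def. \ref{k-ES}(iv), $P_x\cap S\subset P_x\setminus\{0\}\subset {\rm Int}C$. The distance function $w\mapsto d_{usual}(w, X\setminus {\rm Int}C)$ is continuous and positive on $P_x\cap S$, because $X\setminus{\rm Int}C$ is closed and disjoint from $P_x\cap S$. By compactness it attains a minimum $3\eta_x>0$. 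Hence every $v$ with $d_{usual}(v,P_x\cap S)\le 2\eta_x$ lies in ${\rm Int}C$.

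Next I pass from the fixed $x$ to nearby base points. By continuity of $x\mapsto P_x$ into $G(k,X)$, there is an open neighborhood $U_x$ of $x$ in $K$ with $d(P_y,P_x)<\eta_x$ for all $y\in U_x$. From the definition of the gap metric, every $w\in P_y\cap S$ then has some $u\in P_x\cap S$ with $\norm{w-u}<\eta_x$. Now take $v$ with $d_{usual}(v,P_y\cap S)\le \eta_x$. Choose $w\in P_y\cap S$ with $\norm{v-w}$ arbitrarily close to that infimum; the infimum is in fact attained, since $P_y\cap S$ is compact. Then $d_{usual}(v,P_x\cap S)<2\eta_x$, so $v\in{\rm Int}C$.

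Finally I cover the compact set $K$ by finitely many $U_{x_1},\dots,U_{x_m}$ and set $\delta_P=\min_l \eta_{x_l}>0$. For any $y\in K$, pick $l$ with $y\in U_{x_l}$. Then $\{v: d_{usual}(v,P_y\cap S)\le\delta_P\}\subset\{v: d_{usual}(v,P_y\cap S)\le \eta_{x_l}\}\subset{\rm Int}C$, which is the claim.

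The only delicate point is the second step, where the gap-metric estimate must be converted into a pointwise distance estimate between unit spheres. Here the infimum in the gap metric is strict, $<\eta_x$, so a near-minimizer $u$ always exists, and strict inequalities give the needed slack. That is why I leave the factor $3$ versus $2$ in the first step.
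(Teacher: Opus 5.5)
Your proof is correct and complete. The paper does not actually prove this lemma; it only refers to Lemma~2.8 of an earlier work of the author. Your argument is therefore a self-contained replacement rather than a variant of something in the text.

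All three of your steps check out:
\begin{itemize}
\item Compactness of $P_x\cap S$ gives the positive margin $3\eta_x$.
\item The gap-metric bound $d(P_y,P_x)<\eta_x$ gives, for each $w\in P_y\cap S$, some $u\in P_x\cap S$ with $\norm{w-u}<\eta_x$. You also correctly use that the nearest point of $P_y\cap S$ to $v$ is attained.
\item A finite subcover of $K$ yields a uniform $\delta_P$.
\end{itemize}

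An equivalent, slightly shorter packaging is to show directly that $\bigcup_{x\in K}(P_x\cap S)$ is compact. Given $w_n\in P_{x_n}\cap S$, pass to a subsequence with $x_n\to x$. Use the gap metric to pick $u_n\in P_x\cap S$ with $\norm{w_n-u_n}\to 0$, then extract a convergent subsequence of $u_n$. This union then has positive distance $\delta$ from the closed set $X\setminus{\rm Int}\,C$, and $\delta_P=\delta/2$ works. One can equally argue by contradiction with sequences $v_n\notin{\rm Int}\,C$ satisfying $d_{usual}(v_n,P_{x_n}\cap S)\le 1/n$. Your cover by the sets $U_{x_l}$ is the open-cover form of the same compactness; it avoids proving compactness of the union separately, while the sequential form avoids the $3\eta_x$ versus $2\eta_x$ bookkeeping.

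Your proof also makes clear that only three ingredients are used: continuity of $x\mapsto P_x$, compactness of $K$, and $P_x\setminus\{0\}\subset{\rm Int}\,C$ from Def.~\ref{k-ES}(iv). The invariance and the exponential estimate play no role. In the degenerate case ${\rm Int}\,C=X$ your minimum is $+\infty$, and the claim is trivial.
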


\begin{proof} See \cite[Lemma 2.8]{F}.
\end{proof}

\begin{lemma}\label{Extending-uni} Assume that the linear cocycle $(F,\mathcal{T})$ on $K\times X$ admits a $k$-exponential separation and a $k^{\prime}$-exponential separation such that 

{\rm (i)} $k\leq k^{\prime}$ with $k,k^{\prime}\in\mathbb{N}^+$;

{\rm (ii)} the invariant $k$-dimensional continuous bundle and $k^{\prime}$-dimensional continuous bundle with respect to $(F,\mathcal{T})$ on $K\times X$ are denoted by $K\times (P_x)$ and $K\times (P^{\prime}_x)$ respectively, and the positive invariant $k$-codimensional continuous bundle and $k^{\prime}$-codimensional continuous bundle with respect to $(F,\mathcal{T})$ on $K\times X$ are denoted by $K\times (Q_x)$ and $K\times (Q^{\prime}_x)$ respectively;

{\rm (iii)} there are $M,M^{\prime}>0$ and $\gamma,\gamma^{\prime}\in(0,1)$ such that 
$$\begin{aligned} &\norm{T^n_x v_{Q_x}}\leq M\gamma^n \norm{T^n_x v_{P_x}}\\
&\norm{T^n_x v_{Q^{\prime}_x}}\leq M^{\prime}(\gamma^{\prime})^n \norm{T^n_x v_{P^{\prime}_x}}
\end{aligned}$$ for any $ v_{P_x}\in P_x\cap S, v_{Q_x}\in Q_x\cap S,v_{P^{\prime}_x}\in P^{\prime}_x\cap S,v_{Q^{\prime}_x}\in Q^{\prime}_x\cap S$.

Then, 

$$P_x\subset P^{\prime}_x\quad {\rm and} \quad Q^{\prime}_x\subset Q_x$$ for any $x\in K$.
\end{lemma}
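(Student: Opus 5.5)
The plan is to reduce both inclusions to one \emph{cross-domination estimate} comparing the slow bundle of the $k^{\prime}$-separation with the fast bundle of the $k$-separation. Namely, I want constants $C>0$ and $\rho\in(0,1)$ such that
\begin{equation*}
\norm{T^n_y q^{\prime}}\leq C\rho^n\norm{T^n_y p}
\end{equation*}
for all $y\in K$, $n\in\mathbb{N}^+$, $p\in P_y\cap S$ and $q^{\prime}\in Q^{\prime}_y\cap S$. Granting this estimate, both inclusions follow from fairly soft arguments. Throughout I use two facts. First, the projections $\Pi^{P_y}_{Q}$ and $\Pi^{P^{\prime}_y}_{Q^{\prime}}$ are continuous in $y$ on the compact set $K$, so their norms are uniformly bounded. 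Second, invariance of the splittings gives $T^n_y(a+b)=T^n_ya+T^n_yb$ with $T^n_ya\in P^{\prime}_{F^n(y)}$ and $T^n_yb\in Q^{\prime}_{F^n(y)}$.

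\emph{Proof of the cross-domination estimate.} First suppose $k<k^{\prime}$. Since $Q_y$ has codimension $k$ and $\dim P^{\prime}_y=k^{\prime}>k$, there is a unit vector $w\in P^{\prime}_y\cap Q_y$. Applying the $k$-separation to $w\in Q_y\cap S$ gives
\begin{equation*}
\norm{T^n_yp}\geq M^{-1}\gamma^{-n}\norm{T^n_yw}.
\end{equation*}
Since $w\in P^{\prime}_y\cap S$, we have $\norm{T^n_yw}\geq m(T^n_y\mid_{P^{\prime}_y})$. The $k^{\prime}$-separation then gives
\begin{equation*}
m(T^n_y\mid_{P^{\prime}_y})\geq (M^{\prime})^{-1}(\gamma^{\prime})^{-n}\norm{T^n_yq^{\prime}}.
\end{equation*}
Chaining these yields the estimate with $\rho=\gamma\gamma^{\prime}$ and $C=MM^{\prime}$.

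The case $k=k^{\prime}$ is the main obstacle, since $P^{\prime}_y\cap Q_y$ may be trivial. Here I would first try to show $P^{\prime}_y=P_y$ directly by a graph-transform argument. Suppose $P^{\prime}_y\cap Q_y=\{0\}$ for every $y$. By compactness and continuity, $P^{\prime}_y$ is then the graph of a linear map $L_y:P_y\to Q_y$ with $\sup_y\norm{L_y}<\infty$. Write $P^{\prime}_x=T^n P^{\prime}_{F^{-n}(x)}$ and use the $k$-separation. The graph map of $P^{\prime}_x$ has norm at most $M\gamma^n\sup_y\norm{L_y}\to0$, so $L_x=0$ and $P^{\prime}_x=P_x$. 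With $P^{\prime}=P$, the estimate is exactly the $k^{\prime}$-separation.

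If instead some $w\in P^{\prime}_y\cap Q_y\cap S$ exists, I would rerun the chaining argument of the case $k<k^{\prime}$ at that base point. I would then try to propagate the conclusion along orbits and use continuity to reach a contradiction with transversality elsewhere. I expect this case distinction to need the most care.

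\emph{Proof that $Q^{\prime}_x\subset Q_x$.} Let $w\in Q^{\prime}_x$ and write $w=p+q$ with $p\in P_x$ and $q\in Q_x$. Suppose $p\neq0$. The $k$-separation gives $\norm{T^n_xq}\leq M\gamma^n(\norm{q}/\norm{p})\norm{T^n_xp}$, so $\norm{T^n_xw}\geq\tfrac12\norm{T^n_xp}$ for large $n$. On the other hand, the cross-domination estimate gives $\norm{T^n_xw}\leq C\rho^n(\norm{w}/\norm{p})\norm{T^n_xp}$. These two bounds are incompatible for large $n$. Hence $p=0$ and $w\in Q_x$.

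\emph{Proof that $P_x\subset P^{\prime}_x$.} I use backward invariance. Let $v\in P_x\setminus\{0\}$. Since $T^n_{F^{-n}(x)}P_{F^{-n}(x)}=P_x$, for each $n$ there is $u_n\in P_{F^{-n}(x)}$ with $T^n_{F^{-n}(x)}u_n=v$. Decompose $u_n=a_n+b_n$ with $a_n\in P^{\prime}_{F^{-n}(x)}$ and $b_n\in Q^{\prime}_{F^{-n}(x)}$. By invariance of the $k^{\prime}$-splitting and uniqueness of the decomposition of $v=a+b$ with $a\in P^{\prime}_x$, $b\in Q^{\prime}_x$, we get $b=T^n_{F^{-n}(x)}b_n$. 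Applying the cross-domination estimate at $y=F^{-n}(x)$ with $p=u_n/\norm{u_n}$ gives
\begin{equation*}
\norm{b}\leq C\rho^n\frac{\norm{b_n}}{\norm{u_n}}\norm{v}\leq C\rho^n\Bigl(\sup_{y\in K}\norm{{\rm Id}-\Pi^{P^{\prime}_y}_{Q^{\prime}}}\Bigr)\norm{v}.
\end{equation*}
Letting $n\to\infty$ gives $b=0$, so $v\in P^{\prime}_x$.
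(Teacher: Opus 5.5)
For $k<k^{\prime}$ your argument is complete and correct, and it takes a genuinely different and much shorter route than the paper. The dimension count gives a unit vector $w\in P^{\prime}_y\cap Q_y$ at every $y$, hence a uniform cross-domination estimate, and both inclusions then follow from direct estimates. The paper instead runs compactness arguments along backward and forward orbits. It uses them to produce a base point at which both $P\cap Q^{\prime}$ and $P^{\prime}\cap Q$ contain unit vectors, and then plays the two separations against each other there. Your graph-transform treatment of $k=k^{\prime}$ when $P^{\prime}_y\cap Q_y=\{0\}$ for every $y$ is also fine.

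The gap is the remaining subcase of $k=k^{\prime}$: a point $y$ with a unit vector $w\in P^{\prime}_y\cap Q_y$. There you only say you would propagate along orbits and use continuity to contradict transversality somewhere. You do not say what propagates or which transversality fails. So as written the lemma is not proved for $k=k^{\prime}$, which is exactly the case behind Remark~\ref{uni}, and the proof of Theorem~A relies on that remark.

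The missing step is short and purely local. Chaining at this $y$ gives $\norm{T^n_y q^{\prime}}\leq MM^{\prime}(\gamma\gamma^{\prime})^n\norm{T^n_y p}$ for all $n$, $p\in P_y\cap S$ and $q^{\prime}\in Q^{\prime}_y\cap S$. Your proof of $Q^{\prime}_x\subset Q_x$ uses the cross-domination estimate only at the single base point $x$. It also needs $T^n_x p\neq 0$, which holds because $T_x$ maps $P_x$ onto the equidimensional $P_{F(x)}$. Hence that argument gives $Q^{\prime}_y\subset Q_y$. Two closed subspaces of the same finite codimension $k$, one inside the other, coincide, so $P^{\prime}_y\cap Q_y=P^{\prime}_y\cap Q^{\prime}_y=\{0\}$, contradicting $w\neq 0$. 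Thus this subcase never occurs, your graph-transform case is the only one, and no orbit-propagation or continuity argument is needed.
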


\begin{proof} We firstly assert that $P_x\setminus Q^{\prime}_x\neq\emptyset$ for any $x\in K$. Prove be contrary. Suppose that $P_z\subset Q^{\prime}_z$ for some $z\in K$. If $P^{\prime}_z\subset P_z$, then $P^{\prime}_z\subset P_z \subset Q^{\prime}_z$, a contradiction to $X=P^{\prime}_z\oplus Q^{\prime}_z$. Thus, there is 
$v_{P^{\prime}_z}\in (P^{\prime}_z\setminus P_z)\cap S$. Recall that $F$ is a homeomorphism on $K$. By the compactness of $K$, there is an increasing sequence $\{n_i\}_{i=1}^{+\infty}\subset\mathbb{N}^+$ such that $\lim\limits_{i\rightarrow +\infty}n_i=+\infty$ and $\lim\limits_{i\rightarrow +\infty}F^{-n_i}(z)=\tilde{z}\in K$. $T_xP_x=P_{F(x)}\,{\rm and}\,T_xP^{\prime}_x=P^{\prime}_{F(x)}$ for any $x\in K$ implies that there is a $v_{P^{\prime}_{F^{-n_i}(z) }}\in P^{\prime}_{F^{-n_i}(z) }\setminus P_{F^{-n_i}(z) } $ such that $T^{n_i} v_{P^{\prime}_{F^{-n_i}(z) }}=v_{P^{\prime}_z}$ for any $i\in\mathbb{N}^+$. Furthermore, 

$$0<\frac{\norm{\Pi^{Q_z}_Pv_{P^{\prime}_z}}}{\norm{\Pi^{P_z}_Qv_{P^{\prime}_z}}}\leq M\gamma^{n_i} \frac{\norm{\Pi^{Q_{ F^{-n_i}(z)}}_Pv_{P^{\prime}_{F^{-n_i}(z)}}}}{\norm{\Pi^{P_{ F^{-n_i}(z)}}_Q v_{P^{\prime}_{F^{-n_i}(z) }}}}$$ for any $i\in\mathbb{N}^+$. Together with the continuity of $K\times(P^{\prime}_x)$, $P_x\oplus Q_x=X$ and the compactness of $P^{\prime}_x\cap S$ for any $x\in K$, there is a subsequence of $\{n_i\}_{i=1}^{+\infty}$, also denoted by $\{n_i\}_{i=1}^{+\infty}$ without loss of generality, such that 
$\lim\limits_{i\rightarrow +\infty}\frac{v_{P^{\prime}_{F^{-n_i}(z)}}}{\norm{v_{P^{\prime}_{F^{-n_i}(z)}}}}=v_{P^{\prime}_{\tilde{z}}}\in (P^{\prime}_{\tilde{z}}\setminus P_{\tilde{z}})\cap S\cap Q_{\tilde{z}}$. It is clear that $P_{\tilde{z}}\subset Q^{\prime}_{\tilde{z}}$ because of supposing $P_z\subset Q^{\prime}_z$. {\bf (}If not, there is a $v\in P_{\tilde{z}}\cap S$ such that $\Pi_{Q^{\prime}}^{P^{\prime}_{\tilde{z}}}v\neq0$ and hence, there is a positive number $\varepsilon$ such that $\varepsilon<\frac{ \norm{  \Pi_{Q^{\prime}}^{P^{\prime}_{\tilde{z}}}v} }{\norm{\Pi_{P^{\prime}}^{Q^{\prime}_{\tilde{z}}}v}  }\leq+\infty$. It then follows from the continuity of $K\times (P_x)$, $K\times (P^{\prime}_x)$ and $K\times (Q^{\prime}_x)$ that there exist $v_{F^{-n_i}(z)}\in P_{F^{-n_i}(z)}\cap S$ such that 

$$\frac{\varepsilon}{2}<\frac{\norm{\Pi_{Q^{\prime}}^{P^{\prime}_{F^{-n_i}(z)} }v_{F^{-n_i}(z)}}}{\norm{\Pi_{P^{\prime}}^{Q^{\prime}_{F^{-n_i}(z)} }v_{F^{-n_i}(z)}}}$$ for any sufficiently large $i\in \mathbb{N}^+$, and hence, it follows from the compactness of $P_z\cap S$ and a $k^{\prime}$-exponential separation of $(F,\mathcal{T})$ that there exist points $v_{n_i}\in P_z\cap S$ for any sufficiently large $i\in \mathbb{N}^+$ such that $v_{n_i}=\frac{T^{n_i}_{F^{-n_i}(z)}v_{F^{-n_i}(z)}}{\norm{ T^{n_i}_{F^{-n_i}(z)}v_{F^{-n_i}(z)}}}$, 

$$\frac{\norm{\Pi^{Q^{\prime}_z}_{P^{\prime}}v_{n_i}}}{\norm{\Pi^{P^{\prime}_z}_{Q^{\prime}}v_{n_i}}}< \frac{2\cdot M^{\prime}\cdot(\gamma^{\prime})^{n_i}}{\varepsilon}$$ and there is a subsequence of $\{n_i\}$, denoted also by $\{n_i\}$ such that $\lim\limits_{i\rightarrow +\infty}v_{n_i}=v_v \in P^{\prime}_{z}\cap P_{z}\cap S$, a contradiction to $P_z\subset Q^{\prime}_z$ and $P^{\prime}_z\oplus Q^{\prime}_z=X$.{\bf)} It then follows from (iii) that there is $v_{P_{\tilde{z}}}\in P_{\tilde{z}}\cap S$ such that 

\begin{equation}\label{contra-iii}\frac{1}{M^{\prime} (\gamma^{\prime})^n} \norm{T^n_{\tilde{z}}v_{P_{\tilde{z}}}}\leq\norm{T^n_{\tilde{z}}v_{P^{\prime}_{\tilde{z}}}}\leq M\gamma^n \norm{T^n_{\tilde{z}} v_{P_{\tilde{z}}}}\end{equation} for any $n\in\mathbb{N}^+$, a contradiction. Therefore, we have proved this assertion.

Now, we prove that $P_x\subset P^{\prime}_x$ for any $x\in K$ by contrary. Suppose that there is a $z\in K$ such that $P_z\setminus P^{\prime}_z\neq\emptyset$. Then, there is $v_{P_z}\in (P_z\setminus P^{\prime}_z)\cap S$ such that $\Pi^{Q^{\prime}_z}_{P^{\prime}}v_{P_z}\neq 0$. By the compactness of $K$, $P_x\cap S$ for any $x\in K$, and the continuity of $K\times(P_x)$ and $T_x P_x=P_{F(x)},\,\,T_x P^{\prime}_x=P^{\prime}_{F(x)}$ for any $x\in K$, there is an increasing sequence $\{n_i\}_{i=1}^{+\infty}\subset\mathbb{N}^+$ and $v_{P_{F^{-n_i}(z)}}\in P_{F^{-n_i}(z)}\setminus P^{\prime}_{F^{-n_i}(z)}$ such that $\lim\limits_{i\rightarrow +\infty}n_i=+\infty$, $F^{-n_i}(z)=\tilde{z}\in K$, $T^{n_i}_{F^{-n_i}(z)}v_{P_{F^{-n_i}(z)} }=v_{P_z}$ and $ \lim\limits_{i\rightarrow +\infty}\frac{v_{P_{F^{-n_i}(z)}}}{\norm{v_{P_{F^{-n_i}(z)}}}}=v_{P_{\tilde{z}}}\in P_{\tilde{z}}\cap S$. It then follows from (iii) that 

$$\frac{ \norm{\Pi^{Q^{\prime}_z}_{P^{\prime}}v_{P_z}}}{\norm{\Pi^{P^{\prime}_z}_{Q^{\prime}}v_{P_z }}}\leq M^{\prime}(\gamma^{\prime})^{n_i} \frac{\norm{\Pi^{Q^{\prime}_{ F^{-n_i}(z)}}_{P^{\prime}}v_{P_{F^{-n_i}(z)}}}}{\norm{\Pi^{P^{\prime}_{ F^{-n_i}(z)}}_{Q^{\prime}}v_{P_{F^{-n_i}(z)}}}}$$ for any $i\in\mathbb{N}^+$, and hence, $v_{P_{\tilde{z}}}\in (P_{\tilde{z}}\setminus P^{\prime}_{\tilde{z}})\cap S\cap Q^{\prime}_{\tilde{z}}=P_{\tilde{z}}\cap S\cap Q^{\prime}_{\tilde{z}}$. Together with $P^{\prime}_{\tilde{z}}$ being $k^{\prime}(\geq k)$-dimensional space, there is a $v_{P^{\prime}_{\tilde{z}}}\in (P^{\prime}_{\tilde{z}}\setminus P_{\tilde{z}})\cap S$ such that $\Pi^{Q_{\tilde{z}}}_{P} v_{P^{\prime}_{\tilde{z}}}\neq 0$. By the compactness of $K$, $P_x\cap S\,\,P^{\prime}_x\cap S$ for any $x\in K$, and the continuity of $K\times(P_x),\,\,K\times(P^{\prime}_x),\,\,K\times (Q^{\prime}_x)$ and $T_x P_x=P_{F(x)},\,\,T_x P^{\prime}_x=P^{\prime}_{F(x)},\,\,T_x Q^{\prime}_x\subset Q^{\prime}_{F(x)}$ for any $x\in K$, there is an increasing sequence $\{m_i\}_{i=1}^{+\infty}$ with $v_{P_{F^{-m_i}(\tilde{z})}}\in (P_{F^{-m_i}(\tilde{z})}\cap Q^{\prime}_{F^{-m_i}(\tilde{z})})\setminus\{0\}$ and $v_{P^{\prime}_{F^{-m_i}(\tilde{z})}}\in P^{\prime}_{F^{-m_i}(\tilde{z})}\setminus P_{F^{-m_i}(\tilde{z})}$ such that $\lim\limits_{i\rightarrow +\infty}m_i=+\infty$, $\lim\limits_{i\rightarrow +\infty}F^{-m_i}(\tilde{z})=\tilde{\tilde{z}}\in K$, $T^{m_i}_{F^{-m_i}(\tilde{z}) }v_{P_{F^{-m_i}(\tilde{z})}}=v_{P_{\tilde{z}}}$, $T^{m_i}_{F^{-m_i}(\tilde{z}) }v_{P^{\prime}_{F^{-m_i}(\tilde{z})}}=v_{P^{\prime}_{\tilde{z}}} $ and $\lim\limits_{i\rightarrow +\infty}\frac{v_{P_{F^{-m_i}(\tilde{z})}}}{\norm{v_{P_{F^{-m_i}(\tilde{z})}}}}=v_{P_{\tilde{\tilde{z}}}}\in P_{\tilde{\tilde{z}}}\cap S\cap Q^{\prime}_{\tilde{\tilde{z}}}$, $\lim\limits_{i\rightarrow +\infty}\frac{v_{P^{\prime}_{F^{-m_i}(\tilde{z})}}}{\norm{v_{P^{\prime}_{F^{-m_i}(\tilde{z})}}}}=v_{P^{\prime}_{\tilde{\tilde{z}}}}\in P^{\prime}_{\tilde{\tilde{z}}}\cap S$. By utilizing (iii) again, one has that 

$$\frac{\norm{\Pi^{Q_{\tilde{z}}}_{P}v_{P^{\prime}_{\tilde{z}}}}}{\norm{\Pi^{P_{\tilde{z}}}_{Q}v_{P^{\prime}_{\tilde{z}}}}}\leq M(\gamma)^{n_i} \frac{\norm{\Pi^{Q_{ F^{-n_i}(\tilde{z})}}_{P}v_{P_{F^{-n_i}(\tilde{z})}}}}{\norm{\Pi^{P_{ F^{-n_i}(\tilde{z})}}_{Q}v_{P_{F^{-n_i}(\tilde{z})}}}}$$ for any $i\in\mathbb{N}^+$, and hence, $v_{P^{\prime}_{\tilde{\tilde{z}}}}\in (P^{\prime}_{\tilde{\tilde{z}}}\setminus P_{\tilde{\tilde{z}}})\cap S\cap Q_{\tilde{\tilde{z}}}=P^{\prime}_{\tilde{\tilde{z}}}\cap S\cap Q_{\tilde{\tilde{z}}}$. Together with $v_{P_{\tilde{\tilde{z}}}}\in P_{\tilde{\tilde{z}}}\cap S\cap Q^{\prime}_{\tilde{\tilde{z}}} $ and (iii), one can obtain that (\ref{contra-iii}) holds by replacing $v_{P^{\prime}_{\tilde{z}}}, v_{P_{\tilde{z}}}$ as $v_{P^{\prime}_{\tilde{\tilde{z}}}}, v_{P_{\tilde{\tilde{z}}}}$ respectively. It is a contradiction. Thus, we have proved that $P_x\subset P^{\prime}_x$ for any $x\in K$.

For any given $v_{Q^{\prime}_x}\in Q^{\prime}_x\cap S$ with $x\in K$, it is clear that $\Pi^{Q_x}_{P}v_{Q^{\prime}_x}\neq 0$ because of $P_x\subset P^{\prime}_x$ and $P^{\prime}_x\cap Q^{\prime}_x=\{0\}$. Suppose that $\Pi^{P_x}_{Q}v_{Q^{\prime}_x}\neq 0$. It then follows from (iii) that

\begin{equation}\label{Q'-P-decom-var}\frac{\norm{\Pi^{Q_{F^{n}(x)}}_{P}T^{n}_xv_{Q^{\prime}_x}}}{\norm{\Pi^{P_{F^{n}(x)}}_{Q}T^{n}_x v_{Q^{\prime}_x}}}\leq M\gamma^{n} \cdot \frac{ \norm{\Pi^{Q_x}_{P}v_{Q^{\prime}_x}}}{\norm{\Pi^{P_x}_{Q}v_{Q^{\prime}_x}}}\end{equation} for any $n\in \mathbb{N}^+$. By utilizing the compactness of $K$, $P_x\cap S$ for any $x\in K$, and the continuity of $K\times(P_y),\,\,K\times (Q^{\prime}_y)$ such that $T_y P_y=P_{F(y)},\,\,T_y Q^{\prime}_y\subset Q^{\prime}_{F(y)}$ for any $y\in K$, there is an increasing sequence $\{n_i\}_{i\in\mathbb{N}^+}\subset \mathbb{N}^+$ such that $\lim\limits_{i\rightarrow +\infty}F^{n_i}(x)=\tilde{x}$ and 

$$\lim\limits_{i\rightarrow +\infty} \frac{ T^{n_i}_x v_{Q^{\prime}_x}}{\norm{T^{n_i}_x v_{Q^{\prime}_x}}}=v_{Q^{\prime}_{\tilde{x}}}\in P_{\tilde{x}}\cap S\cap Q^{\prime}_{\tilde{x}},$$  a contradition to $X=P^{\prime}_{\tilde{x}}\oplus Q^{\prime}_{\tilde{x}}$ and $P_{\tilde{x}}\subset P^{\prime}_{\tilde{x}}$. Thus, $\Pi^{P_x}_{Q}v_{Q^{\prime}_x}=0$ and $v_{Q^{\prime}_x}\in Q_x$. Then, $Q^{\prime}_x\subset Q_x$ for any $x\in K$.

Therefore, we have completed the proof.
\end{proof}  

\begin{rem}\label{uni} For the case $k=k^{\prime}$, it follows from Lemma \ref{Extending-uni} that there are an unique $k$-dimensional continuous bundle and an unique $k$-codimensional continuous bundle such that $(F,\mathcal{T})$ on $K\times X$ admits a $k$-ES.
\end{rem}

Now, we prove Theorem A.
\begin{proof} The strong invariance of $\mathcal{C}$ w.r.t. $(F,\mathcal{T})$ implies the strong invariance of $C_i$ w.r.t. $(F,\mathcal{T})$ for any $i\in I$.  
Togther with the compactness of $T_x,x\in K$ and Lemma \ref{L: k-ES}, one has that $(F,\mathcal{T})$ on $K\times X$ admits a $k_i$-ES associated with $C_i$ for any $i\in I$, whose invariant $k_i$-dimensional bundle and $k_i$-codimensional bundle are denoted by $K\times (SP_{i,x})$ and $K\times (Q_{i,x})$ respectively.

By virtue of Lemma \ref{Extending-uni} and $k_i<k_{i+1}$ for any $i\in I\setminus\{N\}$, ones have that $SP_{i,x}\subsetneq SP_{i+1,x}$ and $Q_{i+1,x}\subsetneq Q_{i,x}$ for any $x\in K$ and $i\in I\setminus\{N\}$. For any $x\in K$, let 

\begin{equation}\label{PQ0N}P_{0,x}=SP_{0,x}\subset {\rm Int} C_0\cup\{0\},\,\,Q_x=Q_{N,x}\subset (X\setminus C_{N})\cup\{0\}\,\, {\rm and}\,\, j_0=k_0.\end{equation} Then, $K\times (P_{0,x})$ is a $j_0$-dimensional bundle and $K\times (Q_x)$ is a $k_N$-codimensional bundle such that 

\begin{equation}\label{PQ0N-In}T_x P_{0,x}=P_{0,F(x)}\subset {\rm Int} C_0\cup\{0\}\,\, {\rm and}\,\,T_x Q_x\subset Q_{F(x)}\subset (X\setminus C_{N})\cup\{0\}\end{equation} for any $x\in K$, respectively. Let 

$$P_{i+1,x}=\Pi^{Q_{i,x}}_{ SP_{i}}SP_{i+1,x}$$ for any $x\in K$ and $i\in I\setminus\{N\}$. Since $(F,\mathcal{T})$ on $K\times X$ admits a $k_i$-ES associated with $C_i$ for any $i\in I\setminus \{N\}$, one has that 

\begin{equation}\label{Decom-to-P}\begin{aligned} &SP_{i+1,x}=\Pi^{SP_{i,x}}_{ Q_{i}}SP_{i+1,x}\oplus \Pi^{Q_{i,x}}_{ SP_{i}}SP_{i+1,x} =SP_{i,x}\oplus P_{i+1,x}\\
& {\rm and}\quad P_{i+1,x}\subset Q_{i,x} 
\end{aligned}\end{equation} for any $x\in K$ and $i\in I\setminus \{N\}$. Furthermore, ones have $T_x SP_{i,x}=SP_{i,F(x)}\subset {\rm Int} C_{i}\cup\{0\}$ and $T_x Q_{i,x}\subset Q_{i,F(x)}\subset (X\setminus C_{i})\cup \{0\}$ for any $x\in K$ and $i\in I$.  It then follows that 

\begin{equation}\label{P-Posi}\begin{aligned}&P_{i+1,x}\setminus \{0\}\subset {\rm Int} C_{i+1,x}\setminus C_{i,x}\\
& {\rm and} \quad T_x P_{i+1,x} \subset P_{i+1,F(x)}
\end{aligned}\end{equation} for any $x\in K$ and $i\in I\setminus \{N\}$. Note that $K\times (SP_{i,x})$ is a $k_i$-dimensional bundle for any $i\in I$. Then, for any $i\in I\setminus \{N\}$, 

\begin{equation}\label{De-J}j_{i+1}=k_{i+1}-k_i,\end{equation} and hence, $K\times (P_{i+1,x})$ is a $j_{i+1}$-dimensional bundle.

Now, we assert that for any $x\in K$ and $i\in I$, 

\begin{equation}\label{P-to-FP}T_x P_{i,x} = P_{i,F(x)}\end{equation} Since $P_{0,x}=SP_{0,x}$ and $T_x SP_{0,x}=SP_{0,F(x)}$ for any $x\in K$, it suffices to prove that $T_x P_{i+1,x}=P_{i+1,F(x)}$ for any $x\in K$ and $i\in I\setminus \{N\}$. Prove by contrary. Suppose $P_{i+1,F(x)}\setminus T_x P_{i+1,x}\neq \emptyset$ for a given $x\in K$ and $i\in I\setminus \{N\}$. Let $v_1\in P_{i+1,F(x)}\setminus T_x P_{i+1,x}$. It then follows (\ref{Decom-to-P}) and $T_x SP_{i+1,x}=SP_{i+1,F(x)}$ that there is a $v\in SP_{i+1,x}$ such that $T_x v=v_1$ and $\Pi^{SP_{i,x}}_{Q_i}v\neq 0$. By the unique $k_j$-ES of $(F,\mathcal{T})$ for any $j\in I$ because of Remark \ref{uni}, one has that $\Pi^{SP_{i,F(x)}}_{ Q_{i}}v_1=T_x \Pi^{SP_{i,x}}_{ Q_{i}}v\neq 0$, a contradiction to $v_1\in P_{i+1,F(x)}$ and $SP_{i+1,F(x)}=SP_{i,F(x)}\oplus P_{i+1,F(x)}$. Therefore, we have proved the assertion.

By virtue of (\ref{PQ0N}) and (\ref{Decom-to-P}), one has $SP_{N,x}=P_{0,x}\oplus P_{1,x}\oplus\cdots\oplus P_{N,x}$ for any $x\in K$. Note that $X=SP_{N,x}\oplus Q_x$ for any $x\in K$ because of the $k_N$-ES of $(F,\mathcal{T})$ holding. One has that 

$$X=P_{0,x}\oplus P_{1,x}\oplus\cdots\oplus P_{N,x}\oplus Q_x$$ for any $x\in K$. Together with (\ref{PQ0N}), (\ref{PQ0N-In}), (\ref{De-J}) and (\ref{P-to-FP}), $(F,\mathcal{T})$ admits the decomposition of invariant bundles with the index set $J$, whose invariant bundles are $j_i$-dimensional bundle $K\times (P_{i,x})$, $i\in I$ and $k_N$-codimensional bundle $K\times (Q_x)$. Futhermore, combinating with (\ref{P-Posi}), properties (i)-(iv) in Theorem A hold.

Finally, we prove the property {\rm (v)} in Theorem A. By the $k_i$-ES of $(F,\mathcal{T})$ for any $i\in I$, there are constants $M_i>0$ and $\gamma_i\in(0,1)$ such that 

\begin{equation}\label{gamma-for-kiES} \norm{T^n_x w}\leq M_i \gamma_i^n \norm{T^n_x v}\end{equation} for any $v\in SP_{i,x}\cap S$, $w\in Q_{i,x}\cap S$ and $x\in K$. It then follows from Lemma \ref{k-Lya-G} that $\lambda_{\mathcal{T},k_i x}+\log(\gamma_i)\geq \lambda_{\mathcal{T},k_i x}^{Co}$ for any $x\in K$ and $i\in I$, where $\lambda_{\mathcal{T},k_ix}$ and $\lambda^{co}_{\mathcal{T},k_ix}$ are $k_i$-Lyapunov exponent on $K\times (SP_{i,x})$ and $k_i$-coLyapunov exponent on $K\times (Q_{i,x})$ for $x\in K$ respectively. This implies  $c_{\mathcal{T},ix}+\log(\gamma_i)\geq c_{\mathcal{T},i+1 x}$ for any $x\in K$ and $i\in I\setminus \{N\}$. Thus, the property {\rm (v)} in Theorem A has been proved. 

Therefore, we have completed the proof of Theorem A.
\end{proof}

\vskip 5mm
{\rm Proofs of Corollary B:}
{\rm (i)} Clearly, $\lambda_{\mathcal{T}, k_0x}=c_{\mathcal{T}, 0x}$ for any $x\in K$. By Definition \ref{k-LyaE-CoLyaE-LyaV}(i) and \ref{ki-CLyaE}, one has that $c_{\mathcal{T}, ix}\geq \lambda_{\mathcal{T}, k_i x}$ for any $x\in K$ and $i\in I$. Let $\gamma_i\in(0,1),\,i\in I$ be the number in Definition \ref{k-ES} (iii) for $k_i$-ES of $(F,\mathcal{T})$. Together with Definition \ref{k-LyaE-CoLyaE-LyaV}(i) and Lemma \ref{k-Lya-G}, one has that 

$$c_{\mathcal{T}, ix}\geq\lambda_{\mathcal{T}, k_ix}\geq \lambda^{co}_{\mathcal{T}, k_ix}-\log(\gamma_i)\geq c_{\mathcal{T}, i+1x}-\log(\gamma_i)$$ for any $x\in K$ and $i\in I\setminus\{N\}$. Thus, we have completed the proof of (i).

{\rm (ii)} It is not difficult to see that 

\begin{equation}\label{vec-decom}v=\Pi^{Q_x}_{SP_N}v+\sum_{i=0}^{N}\Pi^{P_{i,x}}_{Q_{P_i^-} }v\end{equation} for any $v\in X$ and $x\in K$. Given $x\in K$ and $v\in X\setminus Q_x$ with the smallest index $i\in I$ such that $\Pi^{P_{i,x}}_{Q_{P_i^-} }v\neq 0$. By Theorem A, $(F,\mathcal{T})$ on $K\times X$ admits a $k_i$-ES, whose invariant bundles are $k_i$-dimensional bundle $K\times (SP_{i,x})$ and $k_i$-codimensional bundle $K\times (Q_{i,x})$ such that for any $w\in SP_{i,x}\cap S$, $v\in Q_{i,x}\cap S$ and $n\in\mathbb{N}^+$, $\norm{G^n_x w}\leq M_i \gamma_i^n \norm{G^n_x v}.$ Thus, 

\begin{equation}\norm{T^n_x \Pi^{Q_{i,x}}_{SP_i}v }\leq M_{i}\gamma^n_i\cdot\frac{\norm{\Pi^{Q_{i,x}}_{S\tilde{P}_i}v}}{\norm{\Pi_{Q_{i}}^{SP_{i,x}}v}}\cdot \norm{T^n_x \Pi_{Q_{i}}^{SP_{i,x}}v}\end{equation} for any $n\in \mathbb{N}^+$. Furthermore, ones have that

$$ \begin{aligned}&\lambda_{\mathcal{T},x}(v)=\limsup\limits_{n\rightarrow+\infty}\frac{\log\norm{T_x^{n}v} }{n}\leq \limsup\limits_{n\rightarrow +\infty}\frac{\log(\norm{T_x^n\Pi^{SP_{i,x}}_{Q_{i}}v}+\norm{ T_x^n\Pi^{Q_{i,x}}_{SP_i}v} ) }{n}\\
\leq&\limsup\limits_{n\rightarrow+\infty}\frac{\log(\norm{T_x^n\Pi^{SP_{i,x}}_{Q_{i}}v}(1+M_i\gamma_i^n\cdot \frac{\norm{\Pi^{Q_{i,x}}_{SP_i}v}}{\norm{\Pi^{SP_{i,x}}_{Q_{i}}v}}))}{n}=\lambda_{\mathcal{T},x}(\Pi^{SP_{i,x}}_{Q_{i}}v)\\
\quad&{\rm and}\\
&\lambda_{\mathcal{T},x}(v)=\limsup\limits_{n\rightarrow+\infty}\frac{\log\norm{T_x^{n}v} }{n}\geq \limsup\limits_{n\rightarrow +\infty}\frac{\log(\norm{T_x^n\Pi^{SP_{i,x}}_{Q_{i}}v}-\norm{ T_x^n\Pi^{Q_{i,x}}_{SP_i}v} ) }{n}\\
\geq&\limsup\limits_{n\rightarrow+\infty}\frac{\log(\norm{T_x^n\Pi^{SP_{i,x}}_{Q_{i}}v}(1-M_i\gamma_i^n\cdot \frac{\norm{\Pi^{Q_{i,x}}_{SP_i}v}}{\norm{\Pi^{SP_{i,x}}_{Q_{i}}v}}))}{n}=\lambda_{\mathcal{T},x}(\Pi^{SP_{i,x}}_{Q_{i}}v)
\end{aligned}$$ It yields that $\lambda_{\mathcal{T},x}(v)=\lambda_{\mathcal{T},x}(\Pi^{SP_{i,x}}_{Q_{i}}v)$. Recall that $i\in I$ is the smallest index $i\in I$ such that $\Pi^{P_{i,x}}_{Q_{P_i^-} }v\neq 0$. Then, $ \Pi^{SP_{i,x}}_{Q_{i}}v=\Pi^{P_{i,x}}_{Q_{P^-_i}}v$ and $\lambda_{\mathcal{T},x}(v)=\lambda_{\mathcal{T},x}(\Pi^{P_{i,x}}_{Q_{P^-_i}}v) $. Thus, we have proved (ii) of this corollary.
  
Therefore, we have completed the proof.

$\quad\quad\quad\quad\quad\quad\quad\quad\quad\quad\quad\quad\quad\quad\quad\quad\quad\quad\quad
\quad\quad\quad\quad\quad\quad\quad\quad\quad\quad\quad\quad\quad\quad\quad\quad\quad\quad\quad\quad\square$

\section{The decomposition of quasi-all invariant bundles for the semilinear equation on a Hilbert space}

\indent Let $X$ be a Hilbert space equiped with inner product $<\cdot\,\,,\,\,\cdot>$, which deduces a norm $\norm{\cdot}$. Let $A$ be a symmetric closed dense operator on $X$ such that 

{\rm (i)} the spectrum $\sigma(A)=\{\lambda_n\}_{n=1}^{+\infty}\subset \mathbb{R}$ is contained in its point spectrum such that $\lambda_{n+1}>\lambda_{n}$ for any $n\in\mathbb{N}^+$, and $\lim\limits_{n\rightarrow +\infty}\lambda_{n+1}-\lambda_n=+\infty$;

{\rm (ii)} the unit eigenfunctions related to $\lambda_n$ are $\psi_n^j$, $j\in\mathbb{N}^+,\,1\leq j\leq N_{n}$;

{\rm (iii)} $\cup_{n=1}^{\infty}\{\psi_n^j: j\in\mathbb{N}^+,\,\,1\leq j\leq N_{n}\}$ is an orthogonal basis of $X$.

We investigate the equation

\begin{equation}\label{Equ:semi}\frac{d u}{d t}+Au=\mathcal{F}(u), \,t>0,
\end{equation} which generates a dissipative and compact $\Phi_t$ semiflow on $X^{\alpha}$ with its global compact attractor $K\subset \mathcal{D}(A)\subset X^{\alpha}\subset X$. Here, $X^{\alpha}$ is the fractional power space $\mathcal{D}(A^{\alpha})=X^{\alpha}$ with a certain $\alpha\in(0,1)$. (For example, $X=L^2(S^1)$, where $S^1=\mathbb{R}/\mathbb{Z}$. $A=-\frac{\partial^2}{\partial x^2}$ with its domain $\mathcal{D}(A)=H^2(S^1)$ and its spectrum $\sigma(A)=\{4\pi^2n^2:\,n\in\mathbb{N}\}$. $X^{\frac{1}{2}}=H^1(S^1)$. $\mathcal{F}(u)=f(u(\cdot))$ for any $u\in X$, where $f\in C^2(\mathbb{R})$. There is a $\tilde{M}>0$ such that $x\cdot f(x)<0$ for any $\abs{x}>\tilde{M}$.) 
Assume that 

$${\rm (*)} \quad \mathcal{F}\in C^1(X)\,\, {\rm such\,\, that}\,\, \sup\limits_{u\in K}\norm{\mathcal{F}_u}_{L(X)}\leq M\,\, {\rm for\,\, some}\,\, M>0,$$ where $\mathcal{F}_u$ is the Fr\'{e}chet derivative of $\mathcal{F}$ at $u\in K$. 

By the knowledge in \cite[Chapter 2]{Pazy}, the semigroup $T(t)$ generated by $-A$ on $X$ is analytic. Hereafter, it is reasonable to assume that $T(t)$ is compact for $t>\frac{1}{2}$ by the knowlwdge in \cite[Chapter 6, 7]{Pazy}. Let $L_i={\rm Span}_{X}\{ \cup_{n=1}^{i}\{\psi_n^j: j\in\mathbb{N}^+,\,\,1\leq j\leq N_{n}\}\}$ and $L_i^c={\rm Span}_{X}\{ \cup_{n=i+1}^{+\infty}\{\psi_n^j: j\in\mathbb{N}^+,\,\,1\leq j\leq N_{n}\}\}$ for any $i\in\mathbb{N}^+$, where ${\rm Span}_{X}$ represents to take the closed linear subspace of $X$ spaned by vectors in one subset of $X$. Clearly, $L_i\oplus L_i^c=X$ and $T(t)L_i=L_i$, $T(t)L_i^c\subset L_i^c$ for any $i\in\mathbb{N}^+$. Then, ones have that 

\begin{equation}\label{Dich}\norm{T(t) \mid_{L_i}}_{L(X)}\geq e^{-\lambda_i\cdot t}\quad{\rm and}\quad\norm{T(t) \mid_{L_i^c}}_{L(X)}\leq e^{-\lambda_{i+1}\cdot t} \end{equation} for any $t>0$. Let  $P_i$ be the projection on $L_i$ along $L_i^c$, and $Q_i={\rm Id}-P_i$ for any $i\in \mathbb{N}^+$. Clearly, $\norm{P_i}_{L(X)},\,\norm{Q_i}_{L(X)}\leq 1$ for any $i\in \mathbb{N}^+$. Let 

$$\tilde{C}_i=\{u\in X:\,\norm{P_i u}\geq \norm{Q_i u}\} $$ and denote $\tilde{k}_i=\sum\limits_{n=1}^{i}N_i$ the dimension of $L_i$ for any $i\in\mathbb{N}^+$.

\begin{lemma}\label{cone-tilde} $\tilde{C}_i$ is a complemented and $\tilde{k}_i$-solid cone in $X$ for any $i\in\mathbb{N}^+$.
\end{lemma}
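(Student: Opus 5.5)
We check the four requirements of Def.~\ref{k-cone} and the two extra properties (solidity of rank $\tilde{k}_i$ and complementedness) directly from the orthogonal splitting $X=L_i\oplus L_i^c$. Since the eigenfunctions form an orthogonal basis, $P_i$ and $Q_i$ are orthogonal projections. Hence $\norm{u}^2=\norm{P_iu}^2+\norm{Q_iu}^2$, and both maps $u\mapsto\norm{P_iu}$ and $u\mapsto\norm{Q_iu}$ are continuous.

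\textbf{Cone properties.} Closedness of $\tilde{C}_i$ follows because it is the preimage of $[0,\infty)$ under the continuous map $u\mapsto \norm{P_iu}-\norm{Q_iu}$. The property $\mathbb{R}\cdot\tilde{C}_i\subset\tilde{C}_i$ follows from homogeneity of norms. For solidity, the subspace $H=L_i$ (of dimension $\tilde{k}_i$) satisfies $H\subset\tilde{C}_i$. Moreover, $L_i\setminus\{0\}\subset \{u:\norm{P_iu}>\norm{Q_iu}\}$, which is open. This open set lies in ${\rm Int}\tilde{C}_i$, so $\tilde{C}_i$ is $\tilde{k}_i$-solid. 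For complementedness, take $H^c=L_i^c$. It is closed and has codimension $\tilde{k}_i$. For $u\in L_i^c\setminus\{0\}$ we have $\norm{P_iu}=0<\norm{Q_iu}$, so $u\notin\tilde{C}_i$.

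\textbf{Rank exactly $\tilde{k}_i$.} The containment $L_i\subset\tilde{C}_i$ already gives rank $\geq\tilde{k}_i$. For the reverse, let $H\subset\tilde{C}_i$ be a linear subspace with $\dim H>\tilde{k}_i$. Since $L_i^c$ has codimension $\tilde{k}_i$, the intersection $H\cap L_i^c$ is nontrivial. Concretely, the restriction $P_i|_H:H\to L_i$ has a nontrivial kernel by counting dimensions. Any nonzero $u$ in this intersection lies in $L_i^c\setminus\{0\}\subset X\setminus\tilde{C}_i$, which contradicts $H\subset\tilde{C}_i$. Hence the maximal dimension is exactly $\tilde{k}_i$.

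\textbf{Main obstacle.} None of these steps is genuinely hard. The only delicate point is the rank upper bound, which relies on the dimension-counting intersection argument just given. One should also note that the rank argument applies equally to infinite-dimensional $H$, since any subspace of dimension greater than $\tilde{k}_i$ contains a $(\tilde{k}_i+1)$-dimensional subspace. Everything else is routine once orthogonality of $P_i$ is used.
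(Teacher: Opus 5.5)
Your proof is correct and follows the paper's route: $L_i$ serves as the $\tilde{k}_i$-dimensional subspace lying in the open set $\{u:\norm{P_iu}>\norm{Q_iu}\}\subset{\rm Int}\,\tilde{C}_i$, and $L_i^c$ is the closed complement of codimension $\tilde{k}_i$ that meets $\tilde{C}_i$ only at $0$. The one difference is that you explicitly verify the rank condition (ii) of Def.~\ref{k-cone} by showing $H\cap L_i^c\neq\{0\}$ whenever $\dim H>\tilde{k}_i$, which the paper leaves implicit, so your version is slightly more complete.
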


\begin{proof} Clearly, $\mathbb{R}\cdot \tilde{C}_i\subset \tilde{C}_i$ and $\tilde{C}_i$ is a closed subset of $X$. Note that $L_i\subset  \mathcal{D}(A)$ for any $i\in\mathbb{N}^+$. By the definition of $\tilde{C}_i$, ones have that $L_i\setminus\{0\}\subset {\rm Int}_{X} \tilde{C}_i=\{u\in X:\,\norm{P_i u}>\norm{Q_i u}\}$ and $L_i^c\setminus \{0\}\subset X\setminus \tilde{C}_i$, where ${\rm Int}_{X}$ represents to take the interior of a subset in $X$. Thus, $\tilde{C}_i$ is a complemented and $k_i$-solid cone in $X$.
\end{proof}

Let $u(t;\phi)=\Phi_t(\phi)$ (for simplicity, denoted by $u(t)$) for any $t\geq 0$. For any given $\phi\in K$, the variational equation of (\ref{Equ:semi}) along $u(t)$ is the following:

\begin{equation}\label{Equ:Varia-Semi} \frac{d v}{d t}+A v=\mathcal{F}_{u(t)} v,\,\,t>0.
\end{equation} Let $T_{\phi}^t$ be the solution map of (\ref{Equ:Varia-Semi}) defined by 

$$T^t_{\phi}\xi=v_{\phi}(t;\xi),\,\,t\geq 0,$$ where $v_{\phi}(\xi)$ is the unique solution of (\ref{Equ:Varia-Semi}) with initial data $\xi\in X$. It then follows from \cite[Lemma 3.3.2]{Dan} that 

\begin{equation}\label{const-varia-formu} T^t_{\phi}\xi=T(t)\xi+\int_0^t T(t-s) F_{u(s)} v_{\phi}(s;\xi)ds \in X\,\,{\rm for \,\, any}\,\,t> 0.
\end{equation}

\begin{lemma}\label{Sp-cone-tilde} There is a $N\in\mathbb{N}^+$ such that $T^t_{\phi}(\tilde{C}_{i}\setminus\{0\})\subset {\rm Int}_{X} \tilde{C}_i,\,t>0$ for any $\phi\in K$ and $i\in\mathbb{N}^+$ with $i\geq N$. 
\end{lemma}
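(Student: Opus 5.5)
We use the classical spectral-gap/cone-invariance argument for semilinear parabolic equations, with the gap condition $\lambda_{i+1}-\lambda_i\to+\infty$ absorbing the bounded perturbation $\mathcal{F}_{u(t)}$ from $(*)$. Fix $\phi\in K$, $\xi\in X\setminus\{0\}$, and write $v(t)=T^t_\phi\xi$, $p(t)=P_iv(t)$, $q(t)=Q_iv(t)$, where $p(t)$ is finite-dimensional. Projecting (\ref{Equ:Varia-Semi}) with $P_i$ and $Q_i$, which commute with $A$, gives
\begin{equation*}
\dot p+Ap=P_i\mathcal{F}_{u(t)}v,\qquad \dot q+Aq=Q_i\mathcal{F}_{u(t)}v .
\end{equation*}
Since $u(t)=\Phi_t(\phi)\in K$ and $\norm{P_i},\norm{Q_i}\le1$, the right-hand sides are bounded by $M\norm{v}\le M(\norm p+\norm q)$. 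For $t>0$ the solution enters $\mathcal{D}(A)$ by analyticity, so the energy identities make sense. On $L_i$ we have $\langle Ap,p\rangle\le\lambda_i\norm p^2$, and on $L_i^c$ we have $\langle Aq,q\rangle\ge\lambda_{i+1}\norm q^2$. Hence, wherever $p,q\neq0$,
\begin{equation*}
\frac{d}{dt}\norm p\ge-\lambda_i\norm p-M(\norm p+\norm q),\qquad \frac{d}{dt}\norm q\le-\lambda_{i+1}\norm q+M(\norm p+\norm q).
\end{equation*}

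Now consider $w(t)=\norm{p(t)}-\norm{q(t)}$. At any time $t$ where $\norm p=\norm q=r>0$, i.e.\ on $\partial\tilde C_i$, the estimates give
\begin{equation*}
\dot w\ge(\lambda_{i+1}-\lambda_i-4M)\,r>0,
\end{equation*}
provided $i\ge N$, where $N$ is chosen by (i) so that $\lambda_{i+1}-\lambda_i>4M$ for all $i\ge N$. This choice is uniform in $\phi\in K$ because $M$ is. So whenever the trajectory touches the boundary of the cone, $w$ is strictly increasing there.

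The invariance argument then runs as follows. Take $\xi\in\tilde C_i\setminus\{0\}$, so $w(0)\ge0$, and suppose $w(t_1)\le0$ for some $t_1>0$. Let $t_*=\sup\{t\in[0,t_1]:w(t)\ge0\}$.
\begin{itemize}
\item If $w(t_*)=0$ with $r>0$, the positive derivative forces $w>0$ just after $t_*$, which contradicts the choice of $t_*$ (and the case $t_*=t_1$ is handled similarly).
\item Note that $r=0$ would mean $v(t_*)=0$. Backward uniqueness for this parabolic equation, or simply the finite-dimensional ODE estimate, rules this out.
\end{itemize}
Hence $\norm{p(t)}>\norm{q(t)}$ for all $t>0$, i.e.\ $T^t_\phi\xi\in{\rm Int}_X\tilde C_i$, using the description of the interior from Lemma \ref{cone-tilde}.

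The main obstacle is the starting point $t=0$. A vector $\xi$ on the boundary of the cone with $\xi\notin\mathcal{D}(A)$ may have $\langle Aq,q\rangle$ undefined, so the derivative computation is not available there. I would handle this in two steps. First, work with the differential inequalities in integrated (Duhamel) form via (\ref{const-varia-formu}) and (\ref{Dich}), obtaining
\begin{equation*}
\norm{q(t)}\le e^{-\lambda_{i+1}t}\norm{q(0)}+\int_0^te^{-\lambda_{i+1}(t-s)}M\norm{v(s)}\,ds
\end{equation*}
and an analogous lower bound for $\norm{p(t)}$ with $e^{-\lambda_i t}$. Second, conclude for small $t>0$ that $\norm{p(t)}>\norm{q(t)}$ when $\norm{p(0)}\ge\norm{q(0)}$: the factor $e^{-\lambda_i t}-e^{-\lambda_{i+1}t}\approx(\lambda_{i+1}-\lambda_i)t$ dominates the $O(Mt)$ perturbation terms once $\lambda_{i+1}-\lambda_i>4M$. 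After that the boundary-crossing argument above applies from any positive time on.
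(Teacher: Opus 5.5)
Your proposal is correct and follows essentially the paper's route: both choose $N$ so that $\lambda_{i+1}-\lambda_i$ beats a multiple of $M$ (the paper's condition $\lambda_{i+1}-\lambda_i>M(2+\frac{1}{1-\mu}+1+\mu)$ tends to your $4M$ as $\mu\to0$) and compare a lower bound on $\norm{P_iv}$ with an upper bound on $\norm{Q_iv}$ near $\partial\tilde C_i$. The paper does this with Duhamel--Gronwall estimates on the ratio $\norm{Q_iv}/\norm{P_iv}$ in a band around the boundary, while you use the sign of $\dot w$ on the boundary plus a first-exit argument, which makes explicit the continuation the paper only sketches; the one soft spot is the energy identity at positive times, which needs a strong solution (not automatic from analyticity if $t\mapsto\mathcal{F}_{u(t)}$ is only continuous), but applying your Duhamel estimate on $[\tau-h,\tau]$ at the first time $\tau>0$ with $w(\tau)=0$ gives the same contradiction without it.
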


\begin{proof} By calculation, there is a $\mu\in(0,1)$ such that 

\begin{equation}\label{Ineq}\lambda_{i+1}- \lambda_{i}-M(2+\frac{1}{1-\mu}+1+\mu)>0\end{equation} for any $i\in \mathbb{N}^+$ such that $\frac{\lambda_{i+1}- \lambda_{i}}{4 M}>1$. It then follows from $\lim\limits_{i\rightarrow +\infty}\lambda_{i+1}-\lambda_{i}=+\infty$ that there is $N\in\mathbb{N}^+$ such that (\ref{Ineq}) is solvable in $(0,1)$ for any $i\in\mathbb{N}^+$ with $i\geq N$. 

For any $i\in\mathbb{N}^+$ with $i\geq N$ and $\phi\in K$, given a $\xi\in\partial_{X} \tilde{C}_{i} \setminus\{0\}$, i.e., $\norm{P_i \xi}=\norm{Q_i \xi}$ and $\xi\in X\setminus \{0\}$. Clearly, there is a $\mu\in(0,1)$ such that (\ref{Ineq}) holds. By the continuity of solution $v_{\phi}(\xi)$,  there is a $t_{\phi}(\xi)>0$ such that  

\begin{equation}\label{band-boundary}(1-\mu)\norm{Q_i v_{\phi}(s;\xi)}\leq\norm{P_i v_{\phi}(s;\xi)}\leq (1+\mu)\norm{Q_i v_{\phi}(s;\xi)}\end{equation} for any $s\in [0,t_{\phi}(\xi)]$. Together with (\ref{const-varia-formu}), we have the following estimates:

\begin{equation}\label{cone-estimate-1}\begin{aligned}\norm{Q_i v_{\phi}(t;\xi)}&\leq e^{-\lambda_{i+1} t}\norm{Q_i\xi}+M\int_0^t e^{-\lambda_{i+1} \cdot(t-s)}(\norm{P_i v_{\phi}(s;\xi)}+\norm{Q_i v_{\phi}(s;\xi)})ds\\
&\leq e^{-\lambda_{i+1} t}\cdot [\norm{Q_i\xi}+M(2+u)\int_0^t e^{ \lambda_{i+1} \cdot s}\norm{Q_i v_{\phi}(s;\xi)}]ds]\\
{\rm and}\quad\quad&\\
\norm{P_i \xi}\quad\quad&\leq e^{\lambda_i\cdot t} \norm{P_i v_{\phi}(t;\xi)} +M\int_0^t e^{\lambda_i \cdot(t-s)}(\norm{P_i v_{\phi}(t-s;\xi)}+\norm{Q_i v_{\phi}(t-s;\xi)})ds\\
&\leq e^{\lambda_i\cdot t} \cdot[  \norm{P_i v_{\phi}(t;\xi)} +M(1+\frac{1}{1-\mu})\int_0^t e^{-\lambda_i \cdot s}\norm{P_i v_{\phi}(t-s;\xi)} ds]
\end{aligned}\end{equation} for any $t\in [0,t_{\phi}(\xi)]$. It then follows from Gronwall's inequality that for any $t\in [0,t_{\phi}(\xi)]$,

\begin{equation}\label{cone-estimate-2}\begin{aligned} &\norm{Q_i v_{\phi}(t;\xi)}\leq \norm{Q_i\xi}e^{(-\lambda_{i+1}+M(2+\mu))\cdot t}\\
&{\rm and}\\
 &\norm{P_i v_{\phi}(t;\xi)}\geq \norm{P_i\xi}e^{(-\lambda_{i}-M(1+\frac{1}{1-\mu}))\cdot t}
\end{aligned}\end{equation} Thus,

\begin{equation}\label{cone-estimate-3}\begin{aligned} \frac{\norm{Q_i v_{\phi}(t;\xi)}}{\norm{P_i v_{\phi}(t;\xi)}}&\leq e^{[M(2+\frac{1}{1-\mu}+1+\mu)-\lambda_{i+1}+\lambda_{i}]\cdot t} \cdot \frac{\norm{Q_i \xi}}{\norm{P_i\xi}}\\
&\overset{(\ref{Ineq}) {\rm \,holds\, for\, the\, certain\,\mu} }{<} \frac{\norm{Q_i \xi}}{\norm{P_i\xi}}\leq 1
\end{aligned}\end{equation} for any $t\in(0,t_{\phi}(\xi)]$. It implies that $T^t_{\phi}\xi\in {\rm Int}_{X} \tilde{C}_i$ for any $t\in(0,t_{\phi}(\xi)]$.  In fact, we obtain that
(i) for any $\phi\in K$ and $\xi\in \tilde{C}_i\setminus\{0\}$, if $(1-\tilde{\mu})\norm{Q_i v_{\phi}(s;\xi)}\leq\norm{P_i v_{\phi}(s;\xi)}$ holds on $[0,t_{\phi}(\xi)]$ for $\tilde{\mu}\leq 0$, then the estimates (\ref{cone-estimate-1})-(\ref{cone-estimate-2}) for the projection on $L_i$ along $L_i^c$ hold for $\tilde{\mu}\leq 0$ by replacing $\mu$ as $\tilde{\mu}$; (ii) for any $\phi\in K,\,\xi\in \tilde{C}_i\setminus\{0\}$ such that  (\ref{band-boundary}) holds on an interval of time $[0,\tilde{t}_{\phi}(\xi)]$, one has (\ref{cone-estimate-3}) holds. Therefore, we obtain that $T^t_{\phi}(\tilde{C}_{i}\setminus\{0\})\subset {\rm Int}_{X} \tilde{C}_i,\,t>0$ for any $\phi\in K$ and $i\in\mathbb{N}^+$ with $i\geq N$, and the proof have been completed.
\end{proof}

Let $I=\{0,1,2,\cdots,N\}$, $J=\{j_i:\,i\in I\}$ such that $j_i=N_i, i\geq 1$ and $j_0=\tilde{k}_{N}$. Let $C_i=\tilde{C}_{i+N}$ for any $i\in I$,  and denoted by $T_{\phi}=T^1_{\phi},\,\phi\in K$ for simplicity. Recall that $T(t)$ is a compact semigroup on $X$, and $\mathcal{F}\in C^2(X)$. It then follows from (\ref{const-varia-formu}) that $T_{\phi}$ is compact on $X$. Let $F=\Phi_1$ on $K\subset X^{\alpha}\subset X$ and $\mathcal{T}$ be the map from $K$ to $L(X)$, which is defined by $\mathcal{T}(\phi)=T_{\phi}$ for any $\phi\in K$. By virtue of \cite[Theorem 3.4.1]{Dan}, $\mathcal{T}$ is continous on $X$ and $F$ is continuous on $K$ in $X$. Here, we set an addtional assumption on $F$, that is $F$ is a homeomorphism on $K$. This assumption holds when the flow $\Phi_t$ adimits a flow extension (See e.g. \cite[Definition 2.6]{FWW-1} and \cite[Definition 2.6 and Part III]{Shen-Yi}) on its invaraint set. Then, we have the following theorem:

\begin{theorem}\label{DIB-Equa} The linear cocycle $(F,\mathcal{T})$ generated by (\ref{Equ:semi}), (\ref{Equ:Varia-Semi}) on $K\times X$ admits a decomposition of quasi-all invariant bundles with the index set $J$.
\end{theorem}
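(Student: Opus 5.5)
The plan is to obtain Theorem \ref{DIB-Equa} as a direct application of Theorem A to the cocycle $(F,\mathcal{T})$ with $F=\Phi_1|_K$ and $\mathcal{T}(\phi)=T_\phi=T^1_\phi$. So the work is to check each hypothesis of Theorem A, with the set of cones $\mathcal{C}=\{C_i\}_{i\in I}$, $C_i=\tilde{C}_{i+N}$.

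\textbf{Standing structure.} I first confirm the standing assumptions on the pair $(F,\mathcal{T})$. The base $K$ is a nonempty compact set, being the global attractor. $F$ is a homeomorphism of $K$ by the additional assumption stated before the theorem. $\mathcal{T}:K\to L(X)$ is continuous by \cite[Theorem 3.4.1]{Dan}. Each $T_\phi$ is compact: in (\ref{const-varia-formu}) with $t=1$, the term $T(1)\xi$ is compact by compactness of the analytic semigroup. The integral term is compact by the usual argument splitting $\int_0^{1-\varepsilon}+\int_{1-\varepsilon}^1$: the first piece factors through $T(\varepsilon)$, the second has norm $O(\varepsilon)$ by $(*)$, and compact operators are closed in norm.

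\textbf{Cone structure.} Next I check that $\mathcal{C}$ satisfies the standing hypotheses. By Lemma \ref{cone-tilde}, each $C_i=\tilde{C}_{i+N}$ is a complemented $\tilde{k}_{i+N}$-solid cone. I set $k_i=\tilde{k}_{i+N}$, which is strictly increasing in $i$ since $N_n\ge1$. Then $j_0=k_0$ and $j_i=k_i-k_{i-1}=N_{i+N}$ for $i\ge1$, which recovers the index set $J$ (with the paper's labelling of $N_i$ understood with this shift).

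\textbf{Strong invariance.} Strong invariance of $\mathcal{C}$ follows from Lemma \ref{Sp-cone-tilde} at $t=1$. For every $i\ge N$ and $\phi\in K$ we have $T_\phi(\tilde{C}_i\setminus\{0\})\subset{\rm Int}_X\tilde{C}_i$, so $T_\phi(C_i\setminus\{0\})\subset{\rm Int}\,C_i$ for all $i\in I$. The cocycle property $T^n_\phi=T_{F^{n-1}\phi}\circ\cdots\circ T_\phi$ holds by uniqueness of solutions of (\ref{Equ:Varia-Semi}) together with $u(t+1;\phi)=u(t;\Phi_1\phi)$.

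\textbf{Conclusion and main obstacle.} Theorem A then yields the decomposition $X=P_{0,\phi}\oplus\cdots\oplus P_{N,\phi}\oplus Q_\phi$ with the properties (i)--(ii) of Def.\ \ref{DIB}, which proves the theorem. I expect the main obstacle to be the hypotheses, not the deduction. Specifically, strong invariance must be uniform over the whole closed cone, including its boundary, which requires Lemma \ref{Sp-cone-tilde} to hold globally in time rather than only on a short interval $[0,t_\phi(\xi)]$. If needed, I would strengthen that step with a continuation argument: the ratio $\|Q_iv\|/\|P_iv\|$ is nonincreasing whenever it lies in $[1-\mu,1+\mu]$, so a trajectory can never exit $\tilde{C}_i$. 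Combined with the strict decrease from the boundary, this places $T^1_\phi\xi$ in the interior. Compactness of $T_\phi$ is the other point needing care, but it is routine.
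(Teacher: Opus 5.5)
Your proposal is correct and follows the paper's route: the paper also deduces the theorem in one line from Lemma \ref{cone-tilde}, Lemma \ref{Sp-cone-tilde} and Theorem A, with $C_i=\tilde{C}_{i+N}$ and $T_\phi=T^1_\phi$. Your additional checks fill in details the paper leaves implicit, and they are sound. These are the compactness of $T_\phi$, the corrected labelling $j_i=N_{i+N}$ for $i\geq 1$, and the continuation argument that makes Lemma \ref{Sp-cone-tilde} hold for all $t>0$ rather than only on $[0,t_\phi(\xi)]$.
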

\begin{proof} This theorem is directly implied by Lemma \ref{cone-tilde}, \ref{Sp-cone-tilde} and Theorem A.
\end{proof}

\begin{rem}   A linear cocycle can be extracted from the linearized equation of an evolution equation along its solutions passing a point in its invariant set, or nonautonomous linear evolution equation with some almost-periodic functions being its coefficients. For the linear cocycle with a set of strongly invariant cones, the decomposition of quasi-all invaraint bundles plays a noticeable role in the analysis of local and global dynamics of related nonlinear systems or cocycles. 
\end{rem}

\end{document}